\documentclass{amsart}
\usepackage{pinlabel}
\usepackage{amsmath,  amsthm,  amssymb,  amscd,  mathrsfs,  eucal,  epsfig, color,float}
\usepackage{hyperref}
\usepackage{todonotes}
\usepackage{enumerate}
\usepackage{pdfpages}
\usepackage{verbatim}
\usepackage{epstopdf}

\usepackage{tikz}
\usetikzlibrary{decorations.pathreplacing}

\hypersetup{
	unicode=false,          % non-Latin characters in Acrobat’s bookmarks
	pdftoolbar=true,        % show Acrobat’s toolbar?
	pdfmenubar=true,        % show Acrobat’s menu?
	pdffitwindow=false,     % window fit to page when opened
	pdfstartview={FitH},    % fits the width of the page to the window
	pdftitle={Ziggurat fringes are self-similar},    % title
	pdfauthor={Subhadip Chowdhury},     % author
	pdfsubject={ziggurats, fringes, symmetries},   % subject of the document
	pdfcreator={Subhadip Chowdhury},   % creator of the document
	pdfkeywords={Ziggurat} {Fringe}, % list of keywords
	pdfnewwindow=true,      % links in new window
	colorlinks=false,       % false: boxed links; true: colored links
	linkcolor=red,          % color of internal links (change box color with linkbordercolor)
	citecolor=green,        % color of links to bibliography
	filecolor=magenta,      % color of file links
	urlcolor=cyan           % color of external links
}
% % % % % % % % % % % % % % % % % % % %
\usepackage{etoolbox}
\makeatletter
\def\@footnotecolor{blue}
\define@key{Hyp}{footnotecolor}{%
	\HyColor@HyperrefColor{#1}\@footnotecolor%
}
\patchcmd{\@footnotemark}{\hyper@linkstart{link}}{\hyper@linkstart{footnote}}{}{}
\makeatother
\hypersetup{footnotecolor=blue}
% % % % % % % % % % % % % % % % % % % % %

\usepackage{microtype}
\DisableLigatures{encoding = *, family = *}
% % % % % % % % % % % % % % % % % % % %

\begin{document}

\newtheorem{theorem}{Theorem}[section]
\newtheorem{lemma}[theorem]{Lemma}
\newtheorem{corollary}[theorem]{Corollary}
\newtheorem{conjecture}[theorem]{Conjecture}
\newtheorem{proposition}[theorem]{Proposition}
\newtheorem{question}[theorem]{Question}
\newtheorem{problem}[theorem]{Problem}
\newtheorem*{fringe_formula}{Fringe Formula~\ref{theorem:fringe_formula}}
\newtheorem*{sigma inequality}{$\sigma$-inequality~\ref{theorem:sigma_inequality}}
\newtheorem*{self_similar}{Projective Self Similarity~\ref{theorem:self_similarity}}
\newtheorem*{claim}{Claim}
\newtheorem*{criterion}{Criterion}
\theoremstyle{definition}
\newtheorem{definition}[theorem]{Definition}
\newtheorem{construction}[theorem]{Construction}
\newtheorem{notation}[theorem]{Notation}
\newtheorem{convention}[theorem]{Convention}
\newtheorem*{warning}{Warning}

\theoremstyle{remark}
\newtheorem{remark}[theorem]{Remark}
\newtheorem{example}[theorem]{Example}
\newtheorem{scholium}[theorem]{Scholium}
\newtheorem*{case}{Case}

\newcommand\id{\textnormal{id}}
\newcommand\Z{\mathbb Z}
\newcommand\R{\mathbb R}
\newcommand{\scl}{\textnormal{scl}}
\newcommand{\genus}{\textnormal{genus}}
\newcommand{\homeo}{\textnormal{Homeo}}
\newcommand{\rot}{\textnormal{rot}}
\newcommand{\Hom}{\textnormal{Hom}}
\newcommand{\SL}{\textnormal{SL}}
\newcommand{\fr}{\textnormal{fr}}
\newcommand{\vf}[1]{\frac{1}{#1}}

\title{Ziggurat fringes are self-similar}
\author{Subhadip Chowdhury}
\address{Department of Mathematics \\ University of Chicago \\
Chicago,  Illinois,  60637}
\email{subhadip@math.uchicago.edu}
\date{\today}

\begin{abstract}
We give explicit formulae for fringe lengths of the Calegari-Walker {\em ziggurats} --
 i.e.\/ graphs of extremal rotation numbers associated to positive words in
free groups. These formulae reveal (partial) integral projective self-similarity in ziggurat fringes, which are low-dimensional projections of characteristic polyhedra on the bounded cohomology of free groups. This explains phenomena observed experimentally by Gordenko and Calegari-Walker.
\end{abstract}

\maketitle
\setcounter{tocdepth}{1}
\tableofcontents

\section{Introduction}

Let $\homeo_+^\sim(S^1)$ denote the group of homeomorphisms of the real line
that commute with integer translation,  and let
$\rot^\sim:\homeo_+^\sim(S^1) \to \R$ denote Poincar\'e's (real-valued)
rotation number. Let $F$ be a free group on two generators $a, b$ and let
$w$ be a word in the semigroup generated by $a$ and $b$ (such a $w \in F$ is
said to be {\em positive}). Let $h_a(w)$ and
$h_b(w)$ be the number of $a$'s and $b$'s respectively in $w$. The {\em fringe}
associated to $w$ and a rational number $0\le p/q < 1$ is the set of
$0\le t < 1$ for which there is a homomorphism from $F$ to
$\homeo_+^\sim(S^1)$ with $\rot^\sim(a)=p/q$,  $\rot^\sim(b)=t$ and
$\rot^\sim(w) = h_a(w)p/q + h_b(w)$. Calegari-Walker show that there is some
least rational number $s \in [0, 1)$ so that the fringe associated to $w$ and
to $p/q$ is equal to the interval $[s, 1)$. The {\em fringe length},  denoted
$\fr_w(p/q)$,  is equal to $1-s$.

The main theorem we prove in this paper is an explicit formula for fringe
length:

\begin{fringe_formula}
	 If $w$ is positive,  and $p/q$ is a reduced fraction, then
	 $$\fr_w(p/q) = \frac {1} {\sigma_w(g)\cdot q}$$
	 where $\sigma_w(g)$ depends on the word $w$ and on $g:=gcd(q, h_a(w))$. Furthermore, $g\cdot\sigma_w(g)$ is an integer.
\end{fringe_formula}

As $t\to 1$, the dynamics of $F$ on $S^1$ is approximated better and better by a linear model. For $t$ close to $1$, the nonlinearity can be characterized by a perturbative model; fringes are the maximal regions where this perturbative model is valid. Our main theorem says that the size of this region of stability follows a power law. This is a new example of (topological) nonlinear phase locking in $1$-dimensional dynamics giving rise to a power law, of which the most famous example is the phenomenon of Arnol'd Tongues \cite{RJD}.

\subsection{Motivation}
If $G$ is a Lie group,  and $\Gamma$ is a finitely generated group,  one studies
representations of $\Gamma$ into $G$ up to conjugacy not by looking at the
quotient space $\Hom(\Gamma, G)/G$ (which is usually non-Hausdorff),  but by taking
a further (maximal) quotient on which certain natural functions -- {\em characters}
-- are continuous and well-defined; i.e.\/ one studies {\em character varieties}. 

Recovering a representation from a character is not always straightforward.
Given a (finite) subset $S$ of $\Gamma$, it becomes an interesting and subtle
question to ask what constraints are satisfied by the values of a character on $S$.
For example, the (multiplicative) Horn problem poses the problem of
determining the possible values of the spectrum of the product AB
of two unitary matrices given the spectra of A and B individually.
There is a map
$$\Lambda: SU(n) \times SU(n) \to \R^{3n}$$
taking $A$, $B$ to the logarithms of the spectra of $A$, $B$ and $AB$
(suitably normalized). Agnihotri-Woodward \cite{Agnihotri_Woodward}  and Belkale \cite{Belkale} proved that the
image is a convex polytope, and explicitly described the image.

When $G$ is replaced with a topological group such as $\homeo_+^\sim(S^1)$ (the
group of orientation-preserving homeomorphisms of the circle),
the situation becomes more complicated. Recall that the (real-valued)
rotation number
\[\rot^\sim:\homeo_+^\sim(S^1) \to \R\]
is constant on conjugacy classes (more precisely, on semi-conjugacy classes; see
e.g.\/ Ghys \cite{Ghys} or Bucher-Frigerio-Hartnick \cite{Bucher_Frigerio_Hartnick}, see section \ref{subsection:rotation_number} for more details)  and can be thought of
as the analog of a character in this context. Following Calegari-Walker \cite{Calegari_Walker}
we would like to understand what constraints are simultaneously satisfied
by the value of $\rot^\sim$ on the image of a finite subset of $\Gamma$
under a homomorphism to $\homeo_+^\sim(S^1)$. I.e.\/ we study the
values $x_i:=\rot^\sim(\rho(w_i))$ for finitely many $w_i\in \Gamma$ on a
common representation $\rho$.

\subsection{Free Groups, Positive words and Ziggurats}
The universal case to understand is that of a free group. Thus, 
let $F$ be a free group with generators $a$,  $b$,  and for any element
$w\in F$ let $x_w$ be the function from conjugacy classes of representations
$\rho:F \to \homeo_+^\sim(S^1)$ to $\R$ which sends a representation $\rho$ to
$x_w(\rho):=\rot^\sim(\rho(w))$. The $x_w$ are {\em coordinates} on the space of conjugacy classes of representations,  and we study this space
through its projections to finite dimensional spaces obtained from finitely many
of these coordinates.

For any $w \in F$ and for any $r, s\in\R$ we can define
$$X(w;r, s) = \lbrace x_w(\rho) \; | \; x_a(\rho)=r,  x_b(\rho)=s \rbrace$$
Then $X(w;r, s)$ is a {\em compact} interval (i.e.\/ the extrema are achieved)
and it satisfies $X(w;r+m, s+n) = X(w;r, s)+mh_a(w)+nh_b(w)$ where $h_a, h_b:F \to \Z$
count the signed number of copies of $a$ and $b$ respectively in each word.

If we define $R(w;r, s) = \max \{X(w;r, s)\}$ then $\min \{X(w;r, s)\} = -R(w;-r, -s)$. So
all the information about $X(w;r, s)$ can be recovered from the function
$R(w;\cdot, \cdot):\R^2 \to \R$. In fact,  by the observations made above,  it
suffices to restrict the domain of $R$ to the unit square $[0, 1)\times[0, 1)$.

The theory developed in  \cite{Calegari_Walker} is most useful
when $w$ is a {\em positive word}; i.e.\/ a word in the {\em semigroup} generated
by $a$ and $b$. In this case,  $R(w;r, s)$ is lower semi-continuous,  and 
monotone non-decreasing in both its arguments. Furthermore it is {\em locally
constant} and takes {\em rational values} on an open and dense subset of
$\R^2$. In fact, 

\begin{theorem}[Calegari-Walker \cite{Calegari_Walker} Thm. 3.4, 3.7]\label{theorem:Calegari_Walker_rational}
Suppose $w$ is positive (and not a power of $a$ or $b$),  and suppose $r$ and
$s$ are rational. Then
\begin{enumerate}
\item{$R(w;r, s)$ is rational with denominator no bigger than the smaller
of the denominators of $r$ and $s$; and}
\item{there is some $\epsilon(r, s)>0$ so that $R(w;\cdot, \cdot)$ is constant
on $[r, r+\epsilon)\times [s, s\times \epsilon)$.}
\end{enumerate}
\end{theorem}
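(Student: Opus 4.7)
The plan is to produce an explicit extremal representation $\rho_*$ realizing the supremum defining $R(w;r,s)$, and to show that $\rho_*$ admits a finite combinatorial invariant structure from which rationality, the denominator bound, and the local constancy all follow.

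First I would invoke the standard ``right-greedy'' construction for extrema of rotation numbers of positive words: for each lift $\tilde{a}$ of a circle homeomorphism with $\rot^\sim(\tilde{a}) = r = p_r/q_r$ in lowest terms, there is a unique pointwise-largest such lift, which is a right-continuous monotone step map commuting with integer translation. It has a cyclically ordered invariant orbit $X_a \subset \R/\Z$ of cardinality $q_r$ on which it acts by rigid rotation by $r$, and it collapses each complementary arc onto a single point of $X_a$. Since $w$ is positive and composition of monotone lifts is monotone in each argument, the pointwise-maximal lifts of $a$ and $b$ produce the pointwise-maximal lift of $w$; thus $R(w;r,s)$ is realized as the limit of the rotation numbers of honest homeomorphism-representations converging pointwise from below to the ``step representation'' $\rho_*$ whose generators are the right-greedy maps for $r$ and $s$. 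Monotonicity and semicontinuity of $\rot^\sim$ under pointwise comparison of lifts reduce the computation of $R(w;r,s)$ to that of $\rot^\sim(\rho_*(w))$.

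For part (1), I would observe that the image of $\rho_*(w)$ is contained in the finite set $X_a$ or $X_b$ according to whether the last letter of $w$ is $a$ or $b$; consequently every periodic orbit of $\rho_*(w)$ lies in this invariant set. Since $\rho_*(w)$ is a monotone degree-one lift whose periodic orbit has cardinality equal to the denominator of $\rot^\sim(\rho_*(w))$ written in lowest terms, that denominator is bounded by the size of the invariant set, namely $q_r$ or $q_s$. Because $\rot^\sim$ depends only on the conjugacy class in $\homeo_+^\sim(S^1)$, and because cyclic conjugates of $w$ yield conjugate images under $\rho_*$, I may first cyclically conjugate $w$ (which by the hypothesis $w \neq a^n, b^n$ can be made to end in either generator) so that its last letter is $a$ if $q_r \leq q_s$ and $b$ otherwise, yielding the sharp bound $\min(q_r,q_s)$. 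For part (2), the combinatorial data pinning down $\rot^\sim(\rho_*(w))$, namely the finitely many itineraries of points of $X_a \cup X_b$ under the right-greedy generators, is stable under a small upward perturbation of either $r$ or $s$: if $r' > r$ is sufficiently close to $r$ the right-greedy step map with rotation number $r'$ has an invariant orbit $X_{a'}$ in natural order-preserving bijection with $X_a$, and whose ceiling-type interaction with $X_b$ realizes the same combinatorial assignment as before; likewise for small upward perturbations of $s$. Hence $\rot^\sim(\rho_*(w))$ is unchanged, and $R(w;\cdot,\cdot)$ is constant on a half-open rectangle $[r,r+\epsilon) \times [s,s+\epsilon)$.

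The main technical obstacle will be the semicontinuity step of the first paragraph: one must rigorously justify that the rotation number of a pointwise monotone limit of lifts of homeomorphisms agrees with the limit of the rotation numbers, even when the limit is only upper semicontinuous and not itself a homeomorphism. This is controlled by the monotone dependence of $\rot^\sim$ on pointwise inequalities of lifts together with the attainment of $R$ coming from the compactness of $X(w;r,s)$ already noted in the excerpt.
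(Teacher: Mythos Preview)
Your step-map approach is the same idea that underlies the paper's argument, but the way you set it up contains a genuine error. You claim that for each rational $r=p/q$ there is a \emph{unique pointwise-largest} monotone degree-one lift with $\rot^\sim=r$. This is false: two right-continuous step maps with rotation number $p/q$ whose invariant $q$-orbits sit at different positions on the circle are pointwise incomparable, and the pointwise supremum over all monotone lifts with $\rot^\sim=0$, for instance, is $x\mapsto x+1$, which has rotation number $1$. Hence there is no single extremal pair $(\rho_*(a),\rho_*(b))$; the rotation number of the composite genuinely depends on the relative cyclic position of $X_a$ and $X_b$, and $R(w;r,s)$ is obtained only as the \emph{maximum} over all such interleavings. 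That maximum is exactly what the paper's $XY$-word formula (Proposition~\ref{proposition:XY_formula}) records: an $XY$-word of type $(q_1,q_2)$ encodes one interleaving, and $R(w;p_1/q_1,p_2/q_2)=\max_W\rot_W^\sim(w)$. Part~(1) then follows, as the paper notes immediately after that proposition, because for each fixed $W$ the action of $w$ (cyclically permuted to end in the generator with smaller denominator) lands in a set of $\min(q_1,q_2)$ residues per period. Once you replace the nonexistent single $\rho_*$ by this finite maximum, your argument for~(1) becomes the paper's.

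Your argument for part~(2) has a separate gap. For any rational $r'\in(r,r+\epsilon)$ the denominator $q'$ is different from $q$, so $X_{a'}$ has $q'\ne q$ points and there is no ``natural order-preserving bijection with $X_a$''; the phrase ``realizes the same combinatorial assignment'' therefore has no content. The paper does not supply its own proof of~(2), citing \cite{Calegari_Walker}, but any correct argument must confront the fact that the extremal finite model changes discontinuously with the denominator of $r$, so local constancy cannot be read off as persistence of a fixed finite orbit under perturbation.
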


Furthermore,  when $r$ and $s$ are rational and $w$ is positive,  Calegari-Walker
give an explicit combinatorial algorithm to compute $R(w;r, s)$; it is the
existence and properties of this algorithm that proves 
Theorem~\ref{theorem:Calegari_Walker_rational}. Computer implementation of this
algorithm allows one to draw pictures of the graph of $R$ (restricted to
$[0, 1)\times[0, 1)$) for certain short words $w$,  producing a stairstep structure
dubbed a {\em Ziggurat}; see Figure~\ref{ziggurat}. 

\begin{figure}[htpb]
\labellist
\small\hair 2pt
%\pinlabel $\text{Type A}$ at -200 0
\endlabellist
\centering
\includegraphics[width = \textwidth]{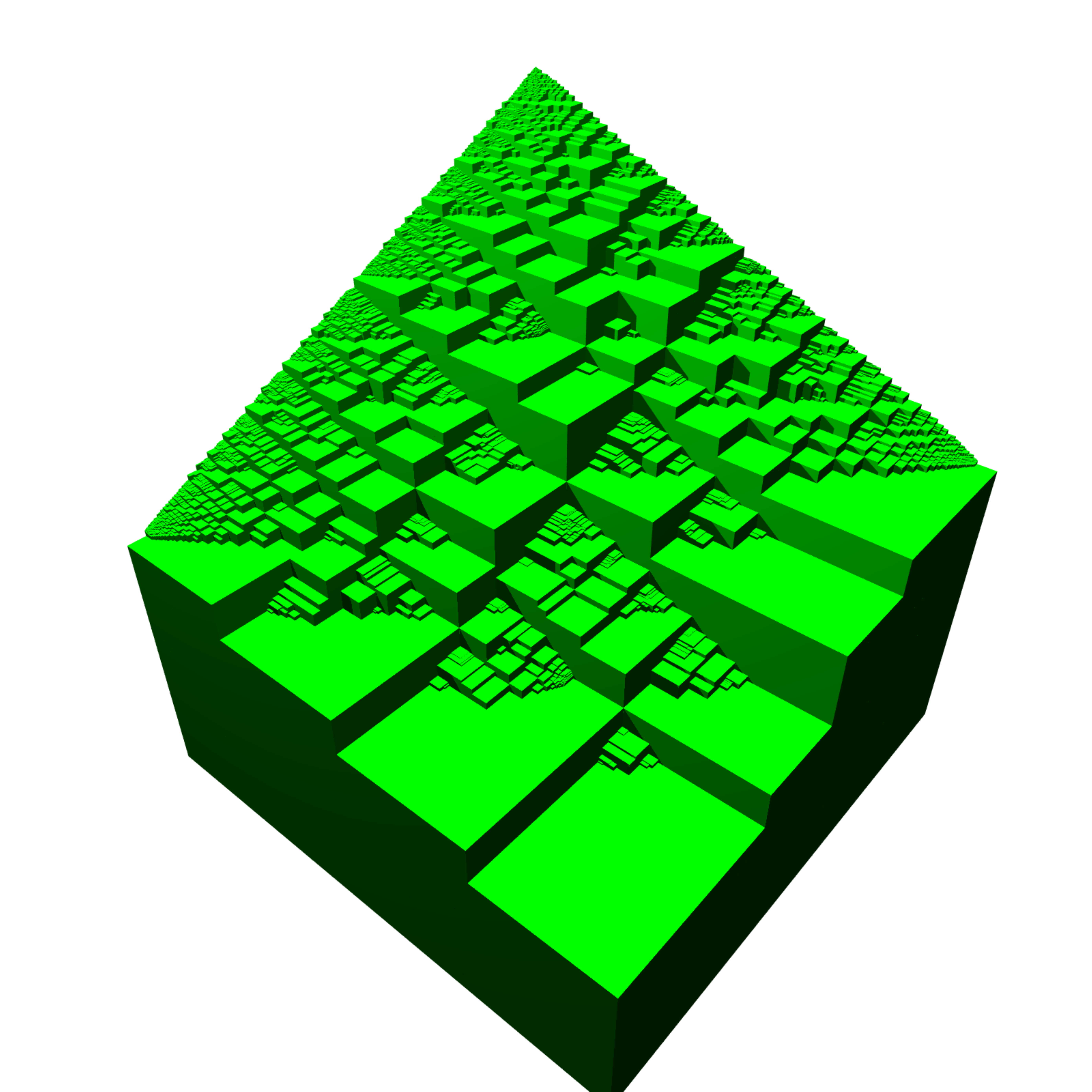}
\caption{Graph of $R(abbbabaaaabbabb;\cdot, \cdot)$; colloquially,  a {\em ziggurat}. Picture courtesy of
Calegari-Walker.}\label{ziggurat}
\end{figure}

In the special case of the word $w=ab$,  a complete analysis
can be made,  and an explicit formula obtained for $R(ab;\cdot, \cdot)$ (this case
arose earlier in the context of the classification of taut foliations of
Seifert fibered spaces,  where the formula was conjectured by Jankins-Neumann
\cite{Jankins_Neumann} and proved by Naimi \cite{Naimi}). But in
{\em no other case} is any explicit formula known or even conjectured,  
and even the computation of $R(w;r, s)$ takes time exponential 
in the denominators of $r$ and $s$.

\subsection{Projective self-similarity and fringes}

In a recent preprint,  Gordenko \cite{Gordenko} gave a new analysis and interpretation
of the $ab$ formula,  relating it to the Naimi formula in an unexpected
way. Her formulation exhibits and explains an {\em integral projective self-similarity}
of the $ab$-ziggurat,  related to the theory of continued fractions,  and the
fact that the automorphism group of $F_2$ is $\SL(2, \Z)$. Such global self-similarity
is (unfortunately) not evident in ziggurats associated to other positive words;
but there is a partial self-similarity (observed experimentally by 
Calegari-Walker and by Gordenko) in the {\em germ} of the ziggurats near
the {\em fringes} where one of the coordinates $r$ or $s$ approaches $1$ from
below.

If we fix a positive word $w$ and a rational number $r$,  and (following
\cite{Calegari_Walker}) we denote by $R(w;r, 1-)$ the limit of $R(w;r, t)$ as $t \to 1$
from below,  then the following can be proved:

\begin{theorem}[Calegari-Walker \cite{Calegari_Walker} Prop. 3.15]\label{theorem:Calegari_Walker_fringe}
If $w$ is positive,  and $r$ is rational,  there is a least rational number
$s \in [0, 1)$ so that $R(w;r, t)$ is constant on the interval $[s, 1)$ and equal to
$h_a(w)r + h_b(w)$.
\end{theorem}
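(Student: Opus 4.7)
The plan is to sandwich $R(w;r,t)$ between matching upper and lower bounds as $t \to 1^-$, and then package the result using the monotonicity, lower semi-continuity, and rationality recorded in Theorem~\ref{theorem:Calegari_Walker_rational}.

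For the upper bound $R(w;r,t) \leq h_a(w)r + h_b(w)$, I would first observe that if $\rot^\sim(\rho(b)) = t < 1$, the lift $\widetilde{\rho(b)}$ realizing $\rot^\sim = t$ must satisfy $\widetilde{\rho(b)}(x) < x + 1$ for every $x \in \R$; otherwise iterating at a violating point would force $\rot^\sim(\rho(b)) \geq 1$. Writing $w = w_1 w_2 \cdots w_k$ and replacing each factor $\widetilde{\rho(b)}$ in $\widetilde{\rho(w)} = \widetilde{\rho(w_1)} \circ \cdots \circ \widetilde{\rho(w_k)}$ by $T_1$ can only enlarge the resulting composition pointwise, since every factor is a monotone increasing homeomorphism and $\widetilde{\rho(b)} \leq T_1$. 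Using that $T_1$ commutes with $\widetilde{\rho(a)}$, the substituted composition rearranges to $T_{h_b(w)} \circ \widetilde{\rho(a)}^{h_a(w)}$, so
\[
\widetilde{\rho(w)}(x) \leq \widetilde{\rho(a)}^{h_a(w)}(x) + h_b(w),
\]
and since pointwise domination of lifts passes to rotation numbers, $\rot^\sim(\rho(w)) \leq h_a(w)r + h_b(w)$.

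For the lower bound I would exhibit a single representation $\rho_0$ at some $t_0 \in [0,1)$ with $\rot^\sim(\rho_0(w)) = h_a(w)r + h_b(w)$. The idea is to make $\rho_0(b)$ act as $T_1$ along a distinguished $\rho_0(w)$-orbit while keeping $\rot^\sim(\rho_0(b)) < 1$ globally. Concretely I would take $t_0 = (q_0-1)/q_0$ with $q_0$ large and construct $\widetilde{\rho_0(b)}$ as a piecewise affine homeomorphism consisting of a wide \emph{fast zone}, on which the displacement is essentially $1$, compensated by a narrow \emph{slow zone} that drops the average rotation to $t_0$. Placing the dynamics of $\rho_0(a)$ inside this slow zone would arrange that after every $b$-factor the distinguished orbit returns to the fast zone and is displaced by a full $+1$ on its next $b$-factor, while each $a$-factor contributes exactly $r$. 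The total displacement per cycle of $\rho_0(w)$ is then $h_a(w)r + h_b(w)$. I expect this to be the main obstacle: the combinatorial pattern of $w$ has to be reconciled with the piecewise structure of $\rho_0(b)$ so that the distinguished orbit visits the fast zone at every $b$-factor and the slow zone at every $a$-factor.

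With both bounds in hand, monotonicity of $R(w;r,\cdot)$ propagates the lower bound to all $t \in [t_0,1)$ and the upper bound forces equality, so $R(w;r,t) = h_a(w)r + h_b(w)$ on $[t_0,1)$. The level set $A := \{t \in [0,1) : R(w;r,t) = h_a(w)r + h_b(w)\}$ is upward-closed by monotonicity, and the combinatorial content of Theorem~\ref{theorem:Calegari_Walker_rational} -- rational values of bounded denominator, local constancy on a dense open set, and finitely many step transitions produced by the Calegari--Walker algorithm -- identifies $A$ with $[s,1)$ for a unique rational $s$, yielding the asserted least value.
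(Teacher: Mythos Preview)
This theorem is not proved in the present paper; it is quoted from \cite{Calegari_Walker} as background, and the paper's own contribution (the Fringe Formula in \S\ref{section:fringe}) takes it as input. So there is no in-paper argument to compare against, and your proposal has to stand on its own.

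Your upper bound is correct and clean: the pointwise inequality $\widetilde{\rho(b)} < T_1$ whenever $\rot^\sim(\rho(b)) < 1$, monotone substitution in a positive word, and centrality of $T_1$ combine to give $\rot^\sim(\rho(w)) \le h_a(w)\,r + h_b(w)$ exactly as you say.

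The lower bound is where the content lies, and your sketch has a real gap. You propose to confine ``the dynamics of $\rho_0(a)$'' to the narrow slow zone of $\rho_0(b)$, but when $r = p/q$ with $q>1$ the map $\rho_0(a)$ has no invariant proper arc: every orbit is $1/q$-dense in the circle. A block $a^{\alpha_i}$ will carry the distinguished point across the fast zone of $\rho_0(b)$, so there is no mechanism ensuring that every $b$-factor finds the orbit in the fast zone and every $a$-factor finds it in the slow zone. The construction in \cite{Calegari_Walker} is instead combinatorial, via the $XY$-word model of Proposition~\ref{proposition:XY_formula}: one writes down an explicit $XY$-word of type $(q,v)$ on which the semigroup action of $w$ realises $(h_a p + h_b q)/q$. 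In hindsight the computation in \S\ref{section:fringe} of this paper (the unique ``optimal partition'' $l_i = q\beta_i - 1$ together with the solution of the linear program) produces exactly such a word and hence supplies the missing existence statement --- but since \S\ref{section:fringe} opens by invoking Theorem~\ref{theorem:Calegari_Walker_fringe}, you would have to unwind that dependence before citing it.

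Your last step also overreaches what Theorem~\ref{theorem:Calegari_Walker_rational} provides. That theorem gives rationality of values and local constancy to the right at rational points; it does not assert that level sets of $R(w;r,\cdot)$ are closed intervals with rational left endpoint, nor that there are only finitely many step transitions. That conclusion is exactly Theorem~\ref{theorem:stairstep} (the Stairstep Algorithm), whose proof is a separate linear-programming argument. Once you have established that $h_a(w)\,r + h_b(w)$ is actually attained as a value of $R(w;r,\cdot)$, invoking Theorem~\ref{theorem:stairstep} in place of Theorem~\ref{theorem:Calegari_Walker_rational} would legitimately finish the job.
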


We refer to the number $1-s$ as in Theorem~\ref{theorem:Calegari_Walker_fringe} 
(depending on the word $w$ and the rational number $r$)
as the {\em fringe length} of $r$,  and denote it $\fr_w(r)$,  or just by $\fr(r)$
if $w$ is understood. In other words,  $\fr_w(r)$ is the greatest number such that
$R(w;r, 1-\fr_w(r)) = h_a(w)r+ h_b(w)$. More precisely,  we should call this a ``left fringe'', 
where the right fringe should be the analog with the roles of the generators
$a$ and $b$ interchanged.

\subsection{Statement of results}

\S~\ref{section:background} summarizes background,  including some elements
from the theory of ziggurats from \cite{Calegari_Walker}. The most important
ingredient is a description of the Stairstep Algorithm.

In \S~\ref{section:fringe} we undertake an analysis of the Stairstep Algorithm
when applied to the computation of fringe lengths. A number of remarkable 
simplifications emerge which allows us to reduce the analysis to a tractable
combinatorial problem which depends (in a complicated way) only on 
$\gcd(q, h_a(w))$.

Our main theorem gives an explicit formula for $\fr_w$ for
any positive word $w$,  and establishes a (partial) integral projective self-similarity for 
fringes, thus giving a theoretical basis for the experimental observations 
of Calegari-Walker and Gordenko.

\begin{fringe_formula}
If $w$ is positive,  and $p/q$ is a reduced fraction,  then
$$\fr_w(p/q) = \frac {1} {\sigma_w(g) \cdot q}$$
where $\sigma_w(g)$ depends only on the word $w$ and $g:=\gcd(q, h_a(w))$;  and $g\cdot\sigma(q)$ is an
integer.
\end{fringe_formula}

The function $\sigma_w(g)$ depends on $w$ and on $q$ in a complicated way,  but
there are some special cases which are easier to understand. In \S~\ref{section:examples} we prove the following inequality:

\begin{sigma inequality}
	Suppose $w=a^{\alpha_1}b^{\beta_1}a^{\alpha_2}b^{\beta_2}\ldots a^{\alpha_n}b^{\beta_n}$. Then the function $\sigma_w(g)$ satisfies the inequality
	$$\frac {h_b(w)}{h_a(w)} \le \sigma_w(g) \le \max \beta_i$$
	Moreover,  $h_b(w)/h_a(w) = \sigma_w(g)$ when $h_a$ divides $q$,  and
	$\sigma_w(g) = \max \beta_i$ when $q$ and $h_a(w)$ are coprime.

\end{sigma inequality}

The Fringe Formula explains the fact that $\fr_w(p/q)$ is independent of $p$ (for $\gcd(p, q)=1)$
and implies a periodicity of $\fr_w$ on infinitely many scales. More precise
statements are found in \S~\ref{section:Projective_Self_Similarity}.
\subsection{Acknowledgement}
I would like to thank Anna Gordenko for sharing her preprint \cite{Gordenko} which directly
inspired the main problem studied in this paper,  and to Victor Kleptsyn, Alden Walker and Jonathan Bowden for some useful discussions. I would also like to thank Clark Butler, Amie Wilkinson and Paul Apisa for several helpful comments. Finally I would like to thank Danny Calegari, my advisor, for his continued support and guidance, as well as for the extensive comments and corrections on this paper, and for providing the thanksgiving turkey.
\section{Background}\label{section:background}

\subsection{Rotation numbers} \label{subsection:rotation_number}

Consider the central extension
$$0 \to \Z \to \homeo_+^\sim(S^1) \to \homeo_+(S^1) \to 0$$
whose center is generated by unit translation $z:p \to p+1$.

Poincar\'e defined the {\em rotation number} $\rot:\homeo_+(S^1) \to \R/\Z$ as
follows. First,  define a function
$\rot^\sim:\homeo_+^\sim(S^1) \to \R$ by
$$\rot^\sim(g) = \lim_{n \to \infty} \frac {g^n(0)} {n}$$
Then $\rot^\sim(gz^n) = \rot^\sim(g) + n$ for any integer $n$,  so that
$\rot^\sim$ descends to a well-defined function $\rot:\homeo_+(S^1) \to \R/\Z$.

Recall that for $F$ a free group generated by $a$,  $b$,  for any $w\in F$ and
for any numbers $r, s\in \R$ we define $R(w;r, s)$ to be the maximum value of
$\rot^\sim(\rho(w))$ under all homomorphisms $\rho:F \to \homeo_+^\sim(S^1)$ for
which $\rot^\sim(\rho(a))=r$ and $\rot^\sim(\rho(b))=s$. The
maximum is achieved on some representation $\rho$ for any fixed $r$ and $s$ (Calegari-Walker \cite{Calegari_Walker}, Lemma $2.13$),  
but the function $R(w;\cdot, \cdot)$ is typically not continuous in either 
$r$ or $s$.

\subsection{Positive words and $XY$ words}

Now suppose $w$ is a positive word (i.e. containing only positive powers of $a$ and $b$),  and $r=p_1/q_1$,  $s=p_2/q_2$ are rational and
expressed in reduced form. Theorem~\ref{theorem:Calegari_Walker_rational} says that
$R(w;p_1/q_1, p_2/q_2)$ is rational,  with denominator no bigger than $\min(q_1, q_2)$. Following \cite{Calegari_Walker}, we
present the Calegari-Walker algorithm to compute $R(w;p_1/q_1, p_2/q_2)$ using purely combinatorial means.

\begin{definition}[$XY$-word]\label{definition:XY_word}
An $XY$-word of {\em type $(q_1, q_2)$} is a cyclic word in the
2-letter alphabet $X, Y$ of length $q_1+q_2$,  with a total of $q_1$ $X$'s and $q_2$ $Y$'s.
\end{definition}

If $W$ is an $XY$-word of type $(q_1, q_2)$,  we let $W^\infty$ denote the bi-infinite
string obtained by concatenating $W$ infinitely many times,  and think of this bi-infinite
word as a function from $\Z$ to $\lbrace X, Y\rbrace$; we denote the image of $i \in \Z$ under
this function by $W_i$,  so that each $W_i$ is an $X$ or a $Y$, 
and $W_{i+q_1+q_2} = W_i$ for any $i$. 

We define an action of the {\em semigroup} generated by
$a$ and $b$ on $\Z$,  associated to the word $W$ (see Figure \ref{figure:action_a_b_w}). The action is given as follows. For each integer
$i$,  we define $a(i) = j$ where $j$ is the least index such that the sequence 
$W_i,  W_{i+1},  \cdots ,  W_j$ contains exactly $p_1+1$ $X$'s.
Similarly,  $b(i) = j$ where $j$ is the least index such that the sequence 
$W_i,  W_{i+1},  \cdots ,  W_j$ contains exactly $p_2+1$ $Y$'s. Note that this means $W_{a(i)}$ is always an $X$ and respectively $W_{b(i)}$ is always $Y$. We can then define
\[\rot_W^\sim(w) = \lim_{n \to \infty} \frac {w^n(1)} {n\cdot(q_1+q_2) }.\]

\begin{figure}[!ht]
	\centering
	\begin{tikzpicture}[scale=0.5, place/.style={circle,  draw=blue!50,   thick,  inner sep=0pt,  minimum size=6mm}]
	\node at (-6, 0) {$Y$};
	\node at (-5, 0) {$X$};
	\node at (-4, 0) {$X$};
	\node (v1) at (-3, 0) {$Y$};
	\node at (-2, 0) {$X$};
	\node at (-1, 0) {$Y$};
	\node at (0, 0) {$X$};
	\node (v2) at (1, 0) {$X$};
	\node at (2, 0) {$Y$};
	\node at (3, 0) {$X$};
	\node (v3) at (4, 0) {$Y$};
	\node at (5, 0) {$X$};
	\node at (6, 0) {$X$};
	\node at (7, 0) {$X$};
	\node (v4) at (8, 0) {$X$};
	\node at (9, 0) {$X$};
	\node at (10, 0) {$Y$};
	\node at (11, 0) {$X$};
	\node at (12, 0) {$X$};
	\node (v5) at (13, 0) {$Y$};
	\path node (c1) at (v1) [place] {}
	node (c2) at (v2) [place] {}
	node (c3) at (v3) [place] {}
	node (c4) at (v4) [place] {}
	node (c5) at (v5) [place] {};
	\node at (-4, 1) {} edge [->, bend left] node[above] {$b$} (c1);
	\node at (-2, 1) {} edge [<-, bend right] node[above] {$a$} (c1);
	\node at (0, 1) {} edge [->, bend left] node[above] {$a$} (c2);
	\node at (2, 1) {} edge [<-, bend right] node[above] {$a$} (c2);
	\node at (3, 1) {} edge [->, bend left] node[above] {$b$} (c3);
	\node at (5, 1) {} edge [<-, bend right] node[above] {$a$} (c3);
	\node at (7, 1) {} edge [->, bend left] node[above] {$a$} (c4);
	\node at (9, 1) {} edge [<-, bend right] node[above] {$b$} (c4);
	\node at (12, 1) {} edge [->, bend left] node[above] {$b$} (c5);
	\node at (14, 1) {} edge [<-, bend right] node[above] {$b$} (c5);
	\end{tikzpicture}
	\caption{Action of $a$ and $b$ on $W$}
	\label{figure:action_a_b_w}
\end{figure}
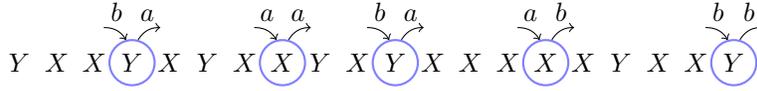
\begin{proposition}[Calegari-Walker formula]\label{proposition:XY_formula}
With notation as above,  there is a formula
\[R(w;p_1/q_1, p_2/q_2) = \max_W \{\rot_W^\sim(w)\}\]
where the maximum is taken over the {\em finite} set of $XY$-words $W$ of type $(q_1, q_2)$.
\end{proposition}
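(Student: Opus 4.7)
The plan is to prove both inequalities
\[R(w;p_1/q_1,p_2/q_2)\ge \max_W\rot_W^\sim(w)\qquad\text{and}\qquad R(w;p_1/q_1,p_2/q_2)\le \max_W\rot_W^\sim(w),\]
each by passing to the universal cover $\R$ on which $\homeo_+^\sim(S^1)$ acts, and comparing the actual dynamics of a representation $\rho$ with the purely combinatorial shift action on $W^\infty$. The lower bound will be produced by a (possibly degenerate-limit) construction of a representation for each $W$, and the upper bound by extracting, from any extremal $\rho$, an $XY$-word that dominates its action combinatorially.

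For the lower bound I would fix $W$ of type $(q_1,q_2)$ and place $q_1+q_2$ points cyclically on $S^1$, labelled in order by $W$; lifting, the markers sit at the integers of $\R$ with $W_i$ the label of marker $i$. The strategy is to construct $\rho_W$ so that $\rho_W(a)$ permutes the lifted $X$-markers by rigid rotation through $p_1$ positions in the $X$-cycle (forcing $\rot^\sim(\rho_W(a))=p_1/q_1$) and symmetrically for $\rho_W(b)$ on the $Y$-markers, while arranging the off-orbit behavior (for instance by collapsing the $Y$-intervals of $\rho_W(a)$ into small neighborhoods of their target $X$-markers, and passing to a limit of such homeomorphisms) so that the composed action of $\rho_W(w)$ on the integer markers agrees with the combinatorial word action on $W^\infty$. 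By construction, $\rot^\sim(\rho_W(w))=\rot_W^\sim(w)$.

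For the upper bound, pick an extremal $\rho$, which exists by Calegari-Walker \cite{Calegari_Walker} Lemma~$2.13$. Because $\rot^\sim(\rho(a))=p_1/q_1$ in lowest terms, the fixed-point theorem applied to a lift of $\rho(a)^{q_1}$ shifted by $-p_1$ produces a periodic orbit $A\subset S^1$ of size exactly $q_1$; analogously, $\rho(b)$ admits a periodic orbit $B$ of size $q_2$. The cyclic arrangement of $A\cup B$ determines an $XY$-word $W$ of type $(q_1,q_2)$. Lift $A\cup B$ to markers in $\Z\subset\R$. For each $X$-marker $i$, one has $\rho(a)(i)=a(i)$ exactly, because both sides permute the lifted $A$-orbit by rotation through $p_1$ positions. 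For a $Y$-marker $j$ bracketed by consecutive $X$-markers $i_-<j<i_+$, monotonicity of $\rho(a)$ gives $\rho(a)(j)<\rho(a)(i_+)=a(i_+)=a(j)$, the last equality by direct inspection of the combinatorial definition. An induction on word length then yields $\rho(w)(i)\le w_{\text{comb}}(i)$ for every integer marker $i$ and every positive $w$. Applying this to $w^n$, dividing by $n(q_1+q_2)$, and letting $n\to\infty$, one obtains $\rot^\sim(\rho(w))\le\rot_W^\sim(w)$.

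The main obstacle will be the lower bound: the combinatorial action of $a$ is not injective on $\Z$ (it collapses each maximal run of $Y$'s onto a single $X$-position) and therefore cannot be realized exactly by a single injective homeomorphism. The resolution is either to exhibit a \emph{sequence} of representations whose induced rotation numbers converge to $\rot_W^\sim(w)$ from below, using lower semicontinuity of $R$ to conclude the bound, or to work in a semi-conjugacy-closed enlargement of representations (as implicit in Calegari-Walker), in which the degenerate limit is a genuine object. A secondary obstacle on the upper bound side is the possibility $A\cap B\ne\emptyset$, to be dealt with by a generic perturbation of $\rho$, again exploiting lower semicontinuity.
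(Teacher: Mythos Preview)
The paper does not give its own proof of this proposition: it is stated as a result of Calegari--Walker \cite{Calegari_Walker} and used as a black box, with the subsequent sentence only noting the rationality consequence and the exponential cost of enumeration. So there is no ``paper's own proof'' to compare against.

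That said, your outline is essentially the argument in \cite{Calegari_Walker}. The upper bound via extracting periodic orbits $A,B$ from an extremal $\rho$, reading off a cyclic $XY$-word from their interleaving, and proving the monotone domination $\rho(w)(i)\le w_{\mathrm{comb}}(i)$ by induction on positive word length is exactly how Calegari--Walker proceed. For the lower bound, your diagnosis of the obstacle is correct: the combinatorial $a$-action collapses $Y$-runs and is not a homeomorphism. Calegari--Walker resolve this not by a limiting sequence but by working directly with \emph{monotone (non-strict) maps} of degree one --- i.e.\ they enlarge $\homeo_+^\sim(S^1)$ to its monotone closure, in which the degenerate ``collapse the $Y$'s'' map is a genuine element with the correct rotation number, and then observe that the supremum of $\rot^\sim$ over this larger monoid equals $R$ (this is the semi-conjugacy-closed enlargement you allude to). Your alternative, approximating by honest homeomorphisms and using lower semicontinuity, also works but is slightly more delicate to write down cleanly. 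The perturbation to make $A\cap B=\emptyset$ is handled in \cite{Calegari_Walker} exactly as you suggest.
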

Evidently,  each $\rot_W^\sim(w)$ is rational,  with denominator less than or equal to
$\min(q_1, q_2)$,  proving the first part of Theorem~\ref{theorem:Calegari_Walker_rational}.
Though theoretically interesting,  a serious practical 
drawback of this proposition is that the number
of $XY$-words of type $(q_1, q_2)$ grows exponentially in the $q_i$. 

\subsection{Stairstep Algorithm}\label{subsection:stairstep_algorithm}

In this subsection we discuss the Stairstep algorithm found in \cite{Calegari_Walker} in more details and in the context of this paper.

\begin{theorem}[Calegari-Walker \cite{Calegari_Walker},  Thm. 3.11]\label{theorem:stairstep}
Let $w$ be a positive word,  and suppose $p/q$ and $c/d$ are rational numbers so that
$c/d$ is a value of $R(w;p/q, \cdot)$. Then 
\[u: = \inf \lbrace t \; : \; R(w;p/q, t) = c/d \rbrace\]
is rational,  and $R(w;p/q, u)=c/d$.
\end{theorem}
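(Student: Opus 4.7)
The plan is to combine the combinatorial Calegari-Walker formula (Proposition~\ref{proposition:XY_formula}) with the monotonicity of $R$ from Theorem~\ref{theorem:Calegari_Walker_rational} to simultaneously establish that $u$ is rational and that $R(w;p/q, u) = c/d$, via a compactness argument at the level of $XY$-words.

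I would start with a decreasing sequence of rationals $t_n = a_n/b_n$ (in lowest terms) with $t_n \downarrow u$ and $R(w;p/q, t_n) = c/d$; by Proposition~\ref{proposition:XY_formula}, each $t_n$ comes with an extremal $XY$-word $W_n$ of type $(q, b_n)$ satisfying $\rot_{W_n}^\sim(w) = c/d$. The equation $\rot_{W_n}^\sim(w) = c/d$ is a rigid discrete constraint: it forces the orbit of $1$ under $w$ acting on $W_n^\infty$ to advance by exactly $c(q + b_n)/d$ per $d$ applications, and the sequence of $X$s and $Y$s encountered along this periodic orbit is drawn from a finite collection of local patterns whose size is controlled by $|w|$, $q$, and $d$ alone (independently of $b_n$). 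A pigeonhole argument extracts from the $W_n$ a finite combinatorial block that occurs, aligned with the $w$-orbit, in infinitely many $W_n$; periodizing this block yields a single $XY$-word $W^*$ of some rational type $(q, b^*)$ with $\rot_{W^*}^\sim(w) = c/d$. The corresponding rational $t^* = a^*/b^*$ then satisfies $R(w;p/q, t^*) \ge c/d$, and monotonicity of $R$ together with minimality of $u$ forces $t^* = u$, giving rationality of $u$ and attainment $R(w;p/q, u) = c/d$ in a single stroke.

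The main obstacle is the combinatorial compactness step: since $\rot_W^\sim(w)$ depends on the entire bi-infinite word $W^\infty$, one cannot naively cut and paste a finite block from $W_n$ and expect the rotation number to persist. Making this rigorous requires pinpointing exactly which finite segment of $W_n$ controls $\rot_{W_n}^\sim(w)$ --- essentially the periodic $w$-orbit on $W_n^\infty$ together with the surrounding $X$--$Y$ context it sees during one full period. Calegari-Walker handle precisely this difficulty via their explicit inductive construction (the Stairstep Algorithm), which builds the extremal $W$ one symbol at a time, tracking the $w$-orbit locally at each step and never using any data beyond what the next letter of $w$ will inspect, so every plateau boundary it produces is manifestly rational by construction. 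I would model the rigorous proof on this algorithmic viewpoint, using the algorithm both to produce the periodic block $W^*$ above and to certify that splicing it into a periodic word genuinely preserves $\rot^\sim(w) = c/d$.
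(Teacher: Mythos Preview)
Your compactness sketch has a genuine gap at the pigeonhole step, and the paper's proof works by a different mechanism that bypasses compactness entirely.

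The difficulty you identify is real: the words $W_n$ have length $q+b_n$ with $b_n\to\infty$, and the $w$-orbit on $W_n^\infty$ visits the entire word before closing up, so there is no bounded ``local pattern'' that determines $\rot_{W_n}^\sim(w)$ in the naive sense you describe. Your proposed fix --- extracting a recurring finite block and periodizing --- is exactly the step that needs an idea, and deferring to ``the algorithmic viewpoint'' does not supply one, because the Stairstep Algorithm in the paper is not an incremental construction of $W$ one symbol at a time.

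What the paper actually does is eliminate the growing parameter $b_n$ at the outset by a change of coordinates. Any $XY$-word of type $(q,v)$ can, after cyclic permutation, be written $W=Y^{t_1}XY^{t_2}X\cdots Y^{t_q}X$ with $t_i\ge 0$ and $\sum t_i=v$; the point is that there are always exactly $q$ blocks of $Y$'s, regardless of $v$. One then analyzes the action of the maximal $a$- and $b$-strings of $w^d$ on $W$ and records, for each $b^{\beta_i}$, the number $l_i$ of $X$'s it passes over. The constraint $\rot_W^\sim(w)\ge c/d$ becomes the linear system
\[
\sum_{i=1}^k l_i=cq-\sum_{i=1}^k(\alpha_ip+1),\qquad t_{s_i+1}+\cdots+t_{s_i+l_i}\le u\beta_i\ \ (1\le i\le k),
\]
together with $\sum t_i=v$, $t_i\ge 0$. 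These constraints are \emph{homogeneous} in $(t_1,\ldots,t_q,u,v)$, so one may normalize $v=1$ and let the $t_i$ be real. Now the entire problem lives in a fixed $q$-dimensional simplex: for each of the finitely many nonnegative integer solutions $(l_1,\ldots,l_k)$ to the partition equation, minimize $u$ subject to a finite system of rational linear inequalities. The infimum is therefore a rational number and is achieved.

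So the missing idea is not compactness of orbits but the homogenization/normalization that collapses all $XY$-words of type $(q,\cdot)$ into a single rational polytope indexed by the finite combinatorial data $(l_1,\ldots,l_k)$. Once you see this, rationality and attainment are immediate from linear programming over $\mathbb{Q}$, and no limiting argument is needed.
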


The theorem is proved by giving an algorithm (the Stairstep Algorithm)
to compute $u$ and analyzing its properties. 
Note that the fringe length $\fr_w(p/q)$ is the value of $1-u$ where $u$
is the output of the Stairstep Algorithm for $c/d = h_a(w)p/q + h_b(w)$. Observe that, whereas Theorem $\ref{theorem:Calegari_Walker_fringe}$ proved the existence of a fringe length, this theorem proves that the length is in fact a rational number.
We now explain this algorithm.
\begin{proof}

Since $R$ is monotone non-decreasing in both of its arguments,  it suffices to
prove that
\begin{equation}
\inf \lbrace t\; : \; R(w;p/q, t)\ge c/d\rbrace \label{stairstep_inequality}
\end{equation} 
is rational,  and the infimum is achieved. Also,  since $R$ is locally constant
from the right at rational points,  it suffices to compute the infimum over
rational $t$. So consider some $t=u/v$ (in lowest terms) such that $R(w;p/q, u/v)\geq c/d$. In fact,  let $W$ be a $XY$ word of type $(q, v)$ for which $R(w;p/q, u/v)=\rot^\sim_W(w)$. After some cyclic permutation,  we can write $$W=Y^{t_1}XY^{t_2}XY^{t_3}X\ldots Y^{t_q}X$$ where $t_i\geq 0$ and $\sum_{i=1}^{q}t_i=v$. Our goal is then to minimize $u/v$ over all such possible $XY$-words $W$.

After some circular permutation (which does not affect $R$),  we may also assume without loss of generality that $w$ is of the form $$w=b^{\beta_n}a^{\alpha_n}\cdots b^{\beta_2} a^{\alpha_2} b^{\beta_1}a^{\alpha_1}$$ where $\alpha_i, \beta_i>0$. Also,  assume that equality is achieved in (\ref{stairstep_inequality}) for $u/v$. Thus by construction,  the action of $w$ on $W$,  defined via its action on $\Z$,  is periodic with a period $d$,  and a typical periodic orbit begins at $W_1=Y$.
 
We fix some notations and try to analyze the action of each maximal string of $a$ or $b$ in $w$ on $W$ by inspecting its action on $\Z$. Note that,  for \[\tilde{s}_i= a^{\alpha_{i}} b^{\beta_{i-1}} a^{\alpha_{i-1}} \cdots b^{\beta_1} a^{\alpha_1}(1), \] the $\tilde{s}_i$'{th} letter in $W^{\infty}$ is always $X$. Let $s_i$ be the index modulo $q$ so that $W^\infty_{\tilde{s_i}}$ is the $s_i$'th $X$ in $W$(cf. Figure \ref{figure:si_th_X}). Thus for a periodic orbit starting at $W_1=Y$,  the string $b^{\beta_i}$ is applied to the $s_i$'{th} $X$.
\begin{figure}[!ht]
	\centering
	\begin{tikzpicture}[place/.style={circle,  draw=blue!50,   thick,  inner sep=0pt,  minimum size=7mm}]
	\node at (-1, -5) {$Y^{t_1}$};
	\node (x1) at (0, -5) {$X$};
	\node at (1, -5) {$Y^{t_2}$};
	\node (x2) at (2, -5) {$X$};
	\node at (3, -5) {$\cdots$};
	\node at (4, -5) {$Y^{t_{s_i}}$};
	\node (xi) at (5, -5) {$X$};
	\node at (6, -5) {$Y^{t_{s_i}+1}$};
	\node at (7, -5) {$\cdots$};
	\node at (8, -5) {$Y^{t_q}$};
	\node (xq) at (9, -5) {$X$};
	\path node (x1c) at (x1)[place] {};
	\path node (x2c) at (x2)[place] {};
	\path node (xic) at (xi)[place] {};
	\path node (xqc) at (xq)[place] {};
	\node (x1label) at (0, -7) {$1^{st}$} edge [red, ->] (x1c);
	\node (x2label) at (2, -7) {$2^{nd}$} edge [red, ->] (x2c);
	\node (xilabel) at (5, -7) {$s_i^{th}$} edge [red, ->] (xic);
	\node (xqlabel) at (9, -7) {$q^{th}$} edge [red, ->] (xqc);
	\end{tikzpicture}
	\caption{The $XY$ word of type $(q,v)$.}
	\label{figure:si_th_X}
\end{figure}
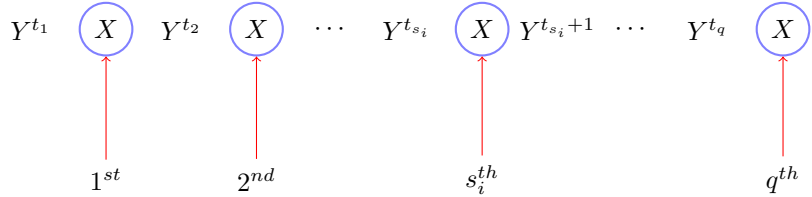

Then by definition,  $b^{\beta_i}(\tilde{s}_i)$ is the least number such that  the sequence $W_{\tilde{s}_i},$ $W_{\tilde{s}_i+1}, \cdots, W_{b^{\beta_i}(\tilde{s}_i)}$ contains exactly $u\beta_i + 1$ $Y$'s. Let $l_i$ denote the number of $X$'s in the sequence $W_{\tilde{s}_i}(=X), W_{\tilde{s}_i+1}, \cdots, W_{b^{\beta_i}(\tilde{s}_i)}(=Y)$(cf. Figure \ref{figure:action_of_b^beta_i}). Thus  $l_i$ is the smallest number such that 
\begin{equation}
t_{s_i+1}+t_{s_i+2}+\ldots + t_{s_i+l_i+1}\geq u\beta_i +1 \label{l_i_inequality}
\end{equation} 
In other words,  $l_i$ is the biggest number such that 
\begin{equation}
t_{s_i+1}+t_{s_i+2}+\ldots + t_{s_i+l_i}\leq u\beta_i 
\end{equation} 
\begin{figure}[!ht]
	\centering
	\begin{tikzpicture}[place/.style={circle,  draw=blue!50,   thick,  inner sep=0pt,  minimum size=14mm}, scale=0.78]
	\node at (-10, -5) {$\cdots$};
	\node at (-9, -5) {$Y^{t_*}$};
	\node at (-8, -5) {$X$};
	\node at (-7, -5) {$Y^{t_{*}}$};
	\node at (-6, -5) {$\cdots$};
	\node (xi) at (-4.8, -5) {$X$};
	\node (yi1)at (-3, -5) {$Y^{t_{s_i}+1}$};
	\node (yij) at (-1.5, -5) {$\ldots$};
	\node (yil) at (0, -5) {$Y^{t_{s_i+l_i}}$};
	\node (xil) at (1.5, -5) {$X$};
	\node (yil1) at (3, -5) {$Y^{t_{s_i+l_i+1}}$};
	\node at (5, -5) {$\cdots$};
	\path node (cix) at (xi)[place] {}
	node(cily) at (yil1)[place]{};
	\path (cix) edge[->, bend left] node [above]{$b^{\beta_i}$} (cily) ;
	\node (xilabel) at (-4.8, -7) {$s_i$'th } edge [red, ->] (xi);
	\node (xillabel) at (1.5, -7) {$(s_i + l_i)$'th } edge [red, ->] (xil);
	\node (totalY) at (-1.5,-8) {Total no. of $Y$ is $\leq u\beta_i$};
	\path (totalY) edge[red,->] (yi1);
	\path (totalY) edge[red,->] (yil);
	\path (totalY) edge[red,->] (yij);
	\end{tikzpicture}
	\caption{Action of $b^{\beta_i}$}
	\label{figure:action_of_b^beta_i}
\end{figure}
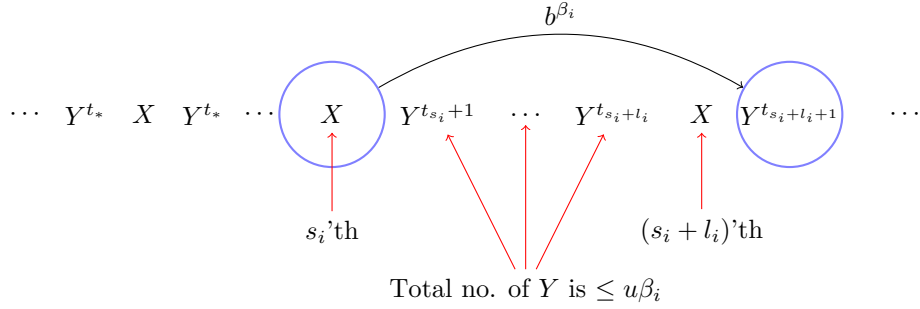

The purpose of rewriting this inequality was to make it homogeneous. Even if equality does not occur in (\ref{stairstep_inequality}), the inequality in (\ref{l_i_inequality}) still holds true. The only difference is that $l_i$ does not necessarily have to be the smallest number,  however it does have to satisfy other constraints which we now describe.

We write $w^d$ as \[w^d=b^{\beta_k}a^{\alpha_k}b^{\beta_{k-1}}a^{\alpha_{k-1}}\cdots b^{\beta_1}a^{\alpha_1}\] and instead of considering the action of $w$ on $W$ with a period $d$,  assume that $w^d$ acts on $W^c$ by its action on $\Z$. Then the maximal $a-$strings and $b-$strings in $w^d$,  all together cover exactly the total number of $X$'s (and $Y$'s) in $W^c$.  For a similar reason,  we know that intervals of the form of $\left(W_j,W_{a^{\alpha_i}(j)}\right)$ enclose precisely $p\alpha_i + 1$ $X$'s.  Thus we get the equality
\[\sum_{i=1}^k (l_i + (\alpha_ip + 1)) = cq.\]
Note that here $\alpha_i$'s are periodic as a function of $i$,  with a period $k/d=n$,  but in general,  the $l_i$'s are not periodic in $i$. We can also give a formula for $s_i$ by counting the number of $X$'s covered.
\[s_i=\sum_{j=1}^{i}(\alpha_j p+1) + \sum_{j=1}^{i-1}l_j.\]

Thus,  we have formulated our minimization problem as a homogeneous linear integral equation subject to finitely many integral linear constraints. Because of homogeneity,  it has a solution in integers if and only if it has a solution in rational numbers,  and consequently,  we can normalize the whole problem by rescaling to $v=1$. Our algorithm is then as follows:

\begin{enumerate}[Step 1.]
\item Replacing $w$ by a cyclic permutation if necessary,  write $w^d$ in the form $w^d=b^{\beta_k}a^{\alpha_k}\ldots b^{\beta_1}a^{\alpha_1} $.
\item Enumerate all non-negative integral solutions to \[ \sum_{i=1}^{k}l_i= cq- \sum_{i=1}^{k}(\alpha_i p +1). \] 
\item For each such solution set $(l_1, \ldots, l_k)$,  define \[s_i=\sum_{j=1}^{i}(\alpha_j p+1) + \sum_{j=1}^{i-1}l_j\]
\item Find the smallest $u$ which satisfies the system of inequalities
\[\begin{cases}
\sum\limits_{i=1}^{q}t_i =1,  \\
 t_i \geq 0 \, \, \,  \forall \,  i, \\
 t_{s_i+1}+t_{s_i+2}+\ldots+t_{s_i + l_i}  \leq u\beta_i & \forall \,  1\leq i\leq k  \, ( \text{indices taken}\!\!\!\mod q )
\end{cases} \]
\item Find the smallest $u$ over all solution sets $(l_1, \ldots, l_k)$.
\end{enumerate}

The solution to this algorithm is necessarily rational and gives the minimal $t$ such that $R(w;p/q, t)\geq c/d$. Also if equality is achieved then clearly $R(w;p/q, u)=c/d$,  and thus the theorem is proved.
\end{proof}

\section{A formula for fringe lengths}\label{section:fringe}

In this section we will apply the Stairstep Algorithm to the computation of fringe lengths.
The key idea is that in this special case,  the equation 
$$\sum_{i=1}^k l_i = cq - \sum_{i=1}^k (\alpha_ip + 1)$$
has a {\em unique} non-negative integral solution. This in turn reduces the last step of the
algorithm to the solution of a {\em single} linear programming problem,  
rather than a system of (exponentially) many inequalities.

\subsection{Statement of Fringe Formula}

First let us state the Fringe Formula. 

\begin{theorem}[Fringe Formula]\label{theorem:fringe_formula}
If $w$ is positive,  and $p/q$ is a reduced fraction,  then
$$\fr_w(p/q) = \frac {1} {\sigma_w(g) \cdot q}$$
where $\sigma_w(g)$ depends only on the word $w$ and $g:=\gcd(q, h_a(w))$;  and $g\cdot\sigma_w(g)$ is an
integer.
\end{theorem}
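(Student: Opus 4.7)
The plan is to apply the Stairstep Algorithm (Theorem~\ref{theorem:stairstep}) with target value $c/d = h_a(w)p/q + h_b(w)$ and to show that in this fringe regime the algorithm collapses to a single linear program whose solution has the stated form.

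First, since $\gcd(p,q)=1$, the fraction $h_a(w)p/q + h_b(w) = (h_a(w)p + h_b(w)q)/q$ reduces to one with denominator $d = q/g$, where $g := \gcd(q, h_a(w))$. Writing $w^d = b^{\beta_k}a^{\alpha_k}\cdots b^{\beta_1}a^{\alpha_1}$ with $k = nd$, Step 2 of the algorithm becomes
\[ \sum_{i=1}^k l_i \;=\; cq - \sum_{i=1}^k(\alpha_i p + 1) \;=\; q\sum_{i=1}^k \beta_i - k, \]
which immediately suggests the candidate $l_i = q\beta_i - 1$.

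The principal technical step, and the main obstacle, is to show that $l_i = q\beta_i - 1$ is the \emph{unique} non-negative integer solution compatible with Step 4's inequalities in the fringe case. The heuristic is that achieving the maximum possible rotation number $h_a(w)p/q + h_b(w)$ requires each $b^{\beta_i}$-block to consume the maximum possible $Y$-mass, so the number $l_i$ of $X$'s it traverses is forced. Any deficit $l_j < q\beta_j - 1$ would force a surplus $l_{j'} \geq q\beta_{j'}$ elsewhere, contradicting the constraint $t_{s_{j'}+1} + \cdots + t_{s_{j'}+l_{j'}} \leq u \beta_{j'}$ in conjunction with $\sum t_\ell = 1$ and $t_\ell \geq 0$. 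I expect the rigorous proof to proceed by a careful local-to-global argument ruling out all other candidate configurations.

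Once the $l_i$'s are pinned down, Step 5 disappears and the algorithm reduces to the single LP: minimize $u$ subject to
\[ \sum_{\ell=1}^q t_\ell = 1, \quad t_\ell \geq 0, \quad t_{s_i+1} + \cdots + t_{s_i + q\beta_i - 1} \leq u\beta_i \quad (1 \leq i \leq k), \]
with $s_i \equiv 1 + p\sum_{j=1}^i \alpha_j \pmod{q}$ by direct substitution. Because $p$ is invertible modulo $q$ and $h_a(w) \bmod q$ has order $q/g$ in $\mathbb{Z}/q\mathbb{Z}$, the sequence $(s_i \bmod q)$ exhibits a $\mathbb{Z}/(q/g)$ translational symmetry, and quotienting by this symmetry reduces the LP to one whose data depends only on $w$ and $g$. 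Defining $\sigma_w(g) := 1/\bigl(q(1 - u^*)\bigr)$, where $u^*$ is the optimal value, then yields $\fr_w(p/q) = 1/(\sigma_w(g)\, q)$. Finally, integrality of $g\,\sigma_w(g)$ follows from the rationality of $u^*$ (Theorem~\ref{theorem:stairstep}) together with the denominator bounds inherited by the reduced LP, which has dimension controlled by $g$.
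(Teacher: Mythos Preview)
Your overall strategy matches the paper's: run the Stairstep Algorithm at the fringe value, show the partition $(l_i)$ is forced, and reduce to a single linear program. Your uniqueness argument for $l_i = q\beta_i - 1$ is also essentially the paper's (if some $l_j \ge q\beta_j$ then the $j$th constraint together with $\sum t_\ell = 1$ forces $u\ge 1$, so $l_i \le q\beta_i-1$ for all $i$; summing forces equality). No ``careful local-to-global argument'' is needed beyond this.

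The real gap is that you stop at the linear program without solving it, and the remaining claims do not follow from the symmetry you invoke. The key step you are missing is that with $l_i = q\beta_i - 1$ the $i$th constraint simplifies dramatically: the indices $s_i+1,\dots,s_i+q\beta_i-1$ taken mod $q$ hit every residue $\beta_i$ times except $s_i$, which is hit $\beta_i-1$ times, so the constraint becomes
\[
\beta_i\Bigl(\sum_{\ell=1}^q t_\ell\Bigr) - t_{s_i} \le \beta_i u, \qquad\text{i.e.}\qquad \frac{t_{s_i}}{\beta_i} \ge 1-u.
\]
Now the LP is a max--min problem whose explicit optimum is $1-u^* = 1/\sum_{i\in\Lambda}\beta_i$, where $\Lambda$ indexes the distinct residues $s_i$ (taking the largest $\beta_i$ at each residue). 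Only with this explicit formula in hand can you run the combinatorial analysis: $s_I \equiv s_J \pmod q$ iff $A_I \equiv A_J \pmod q$ (where $A_i=\sum_{j\le i}\alpha_j$), and the $A_i$'s for $1\le i\le nq'$ break into $q'=q/g$ translates of the first $n$-block by multiples of $g$, so $\sum_{i\in\Lambda}\beta_i = q'\cdot S$ with $S$ an integer depending only on $w$ and $g$. This yields $\fr_w(p/q)=1/(q'S)$ and $\sigma_w(g)=S/g$, from which both the $g$-dependence and the integrality of $g\,\sigma_w(g)$ are immediate.

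Your appeal to a $\Z/(q/g)$ translational symmetry of the constraint set is correct in spirit, but symmetry of an LP does not by itself tell you the optimal value scales like $1/q$, nor does ``dimension controlled by $g$'' give denominator bounds sharp enough to conclude $g\,\sigma_w(g)\in\Z$. Both conclusions require the explicit solution above.
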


The formula for $\sigma_w(g)$ depends on both the $\alpha_i$ and the $\beta_j$ in a complicated way, 
which we will explain in the sequel.

\subsection{Proof of the Fringe Formula}

We now begin the proof of the Fringe Formula. This takes several steps,  and requires a careful
analysis of the Stairstep Algorithm. We therefore adhere to the notation in \S~\ref{subsection:stairstep_algorithm}. After cyclically permuting $w$ if necessary we write $w$ in the form
\[w=b^{\beta_n}a^{\alpha_n}\ldots b^{\beta_1}a^{\alpha_1}.\]

\subsubsection{Finding the optimal partition}

First note that by Theorem (\ref{theorem:Calegari_Walker_fringe}),  it is enough to find the minimum $t$ such that 
\[R(w;p/q, t)=\frac{h_ap+h_bq}{q}.\] Thus to apply the stairstep algorithm (\ref{theorem:stairstep}),  we are going to fix $c/d=(h_ap+h_bq)/q$ where $c/d$ is the reduced form. Let us denote the $gcd$ of $h_a$ and $q$ by $g$ so that we have
\[c=\frac{h_ap+h_bq}{g}, \,  d=\frac{q}{g}\]
since $(p, q)=1$. Further writing $h_a=h'g$ and $q=q'g$,  we rewrite the above equations as
\[c=h'p+h_bq', \,  d=q'. \]
Thus step $1$ of our algorithm becomes
\[w^{q'}=b^{\beta_{nq'}}a^{\alpha_{nq'}}\ldots b^{\beta_1}a^{\alpha_1}\] where clearly $\alpha_i, \beta_i$ are periodic as functions of $i$ with period $n$. 

Similarly, step $2$ of our algorithm transforms to
\[l_1+\ldots+l_{q'.n}=\underbrace{\frac{h_a.p + h_b.q}{g}}_{=c}.q - \underbrace{q'.h_a}_{=\sum_{i=1}^{nq'}\alpha_i}.p - q'.n \] i.e. 
\begin{align}
l_1+\ldots+l_{nq'}= {h_b. qq'} -nq'
\end{align} 
and the equations in step $4$ to find the minimum solution $u$,  become
\begin{align}
\sum\limits_{i=1}^{q}t_i &=1 \label{geneqn1} \\
t_i &\geq 0 &\forall \,  i\label{geneqn2}\\
t_{s_i+1}+t_{s_i+2}+\ldots+t_{s_i + l_i} & \leq \beta_iu &\forall \,  1\leq i\leq nq' \label{geneqn3}
\end{align} where indices are taken$\!\pmod{q}$.
Now if any of the $l_i$ is greater than or equal to $q\beta_i$,  then the indices on the LHS of equation (\ref{geneqn3}) cycle through all of $1$ through $q$ a total of $\beta_i$ times. Then using (\ref{geneqn1}),  we get that
\[\beta_i=\beta_i\sum_{1}^{q}t_i\leq t_{s_i+1}+t_{s_i+2}+\ldots+t_{s_i + l_i} \leq \beta_iu\] implying $u\geq 1$,  which is clearly not the optimal solution. Hence for the minimal solution $u$,  we must have
\[l_i\leq q\beta_i-1, \,  \forall\,  1\leq i\leq nq'.\]
Summing up all of these inequalities,  we get that
\[\sum_{i=1}^{nq'}l_i\leq q\sum_{i=1}^{nq'}\beta_i - nq' = qq'h_b -nq'\] But on the other hand,  by step $2$,  equality is indeed achieved in the inequality above  and hence 
\begin{equation}
l_i=q\beta_i-1, \,  \forall\,  1\leq i\leq nq'
\end{equation} 
is the \emph{unique} non-negative integral solution to the partition problem in step $2$. As mentioned before, this means we only need to deal with a single linear programming problem henceforth,  formulated more precisely in the next section.

\subsubsection{A linear programming problem} 
With the specific values of $l_i$ found above,  we can transform equations (\ref{geneqn1}),  (\ref{geneqn2}) and (\ref{geneqn3}) as follows. Note that for $l_i=q\beta_i-1$,  the set of indices ${s_i+1}, {s_i+2}, \cdots, {s_i+l_i}$ cycle through all of the values $1, 2, \cdots,  q$ a total of $\beta_i$ times,  except one of them,  namely $s_i\pmod{q}$,  which appears $\beta_i-1$ times. Then we can rewrite (\ref{geneqn3}) as 
\[\beta_i\left(\sum_{j=1}^{q}t_j\right)-t_{s_i}\leq \beta_i u \quad \forall \,  1\leq i\leq nq'\] i.e.
\[\frac{t_{s_i}}{\beta_i} \geq 1-u \quad \forall \,  1\leq i\leq nq'\]

Observe that in the above equation,  $\beta_i$'s are periodic with a period $n$ whereas the $s_i$'s are well defined modulo $q$ (since $t_i$'s have period $q$),  which is usually much bigger than $n$. Then for the purpose of finding an $u$ which satisfies the system of equations (\ref{geneqn1}),  (\ref{geneqn2}) and (\ref{geneqn3}), it will be enough to consider the indices $i$ for which $\beta_i$ is maximum for the same value of $s_i$. 

To make the statement more precise,  we introduce the following notation. Let  the set of indices $\Lambda$ be defined by
\[\Lambda=\left\{i\, \left|\,  \beta_i=\max\limits_{\substack{s_j=s_i \\ 1\leq j\leq nq'}}\beta_j  \right.\right\}.\] Then the first thing to note is that the set of numbers $\{s_i\}_{i\in \Lambda}$ are all distinct. Next recall that we are in fact trying to find the fringe length,  which is $1-t$,  where $t$ is the solution to the stairstep algorithm. So with a simple change of variable,  our algorithm becomes the following \emph{linear programming problem}:
\begin{align*}
\text{Find maximum of}\quad &\min_{i\in \Lambda}\left\{\vf{\beta_i} t_{s_i}\right\} \\
\text{Subject to}\quad &\sum_{i\in \Lambda}t_{s_i}\leq 1, \,  t_{s_i}\geq 0 \,  \forall i
\end{align*}

But since we are trying to find the maximum,  we may as well assume that $\sum_{i\in \Lambda}t_{s_i}=1$ and $t_k=0$ if $k\neq s_i$ for some $i\in \Lambda$. Then by a theorem of Kaplan\cite{Kaplan},  we get that the optimal solution occurs when for all $i\in \Lambda$,  the number ${t_{s_i}}/{\beta_i}$ equals some constant $T$ independent of $i$. To find $T$,  observe that
\[\frac{t_{s_i}}{\beta_i}=T \Rightarrow \sum_{i\in \Lambda}\beta_i T = 1 \Rightarrow T= \vf{\sum_{i\in \Lambda}\beta_i}. \] Thus the optimal solution to the linear programming problem,  which is also the required fringe length is given by
\begin{equation}
{\fr_w(p/q)=\vf{\sum_{i\in \Lambda}\beta_i}.}
\end{equation}
So all that remains is to figure out what the set of indices $\Lambda$ looks like. In the rest of this section we try to characterize $\Lambda$ and prove the fringe formula \ref{theorem:fringe_formula}.

\subsubsection{Reduction to combinatorics}
It is clear from the definition that to figure out the set $\Lambda$,  we need to find out exactly when two of the $s_i$'s are equal as $i$ ranges from $1$ to $nq'$. Recall that the indices $s_i$ are taken modulo $q$. Using the optimal partition,  we get that 
\[s_i+l_i=\sum_{j=1}^{i}(p\alpha_j+1+q\beta_j-1)\] and hence \[s_I=s_{J} \Leftrightarrow \sum_{j=1}^{I}\alpha_j\equiv \sum_{j=1}^{J}\alpha_j \pmod{q}\] since $l_I\equiv l_{J}\pmod{q}$. Thus the elements of $\Lambda$ are in bijective correspondence with the number of residue classes modulo $q$ in the following set of numbers:
\begin{align*}
A_1&=\alpha_1\\
A_2&=\alpha_1+\alpha_2\\
A_3&=\alpha_1+\alpha_2+\alpha_3\\
A_4&=\alpha_1+\alpha_2+\alpha_3+\alpha_4\\
\vdots&\\
A_{nq'}&=\alpha_1+\alpha_2+\alpha_3+\ldots+\alpha_{nq'}
\end{align*}
So we can rewrite the formula for the set $\Lambda$ as \[\Lambda=\left\{ i \left| \beta_i=\max\limits_{\substack{A_j\equiv A_i\pmod{q}\\ 1\leq j\leq nq'}} \beta_j \right.\right\}\]
Note that $A_n=h_a$ and $\alpha_i$'s are periodic with period $n$. So we have,  $A_{n+i}=A_i+h_a$ or in other words,  the collection of numbers $A_1, A_2, \ldots, A_{nq'}$ is nothing but a union of disjoint translates of the collection $(A_1, A_2, \ldots, A_n)$ by $0, h_a, 2h_a, \ldots, (q'-1)h_a$.

Let us refer to the $n$-tuple $(A_1, A_2, \ldots, A_n)$ as the first ``$n$-block". Similarly the $h_a$-translate of the first $n$-block is referred to as the second $n$-block and so on. Note that $q'h_a=h'q$,  so the $q'h_a$-translate of the first $n$-block is identical to itself modulo $q$. Hence we may think of translation by $(q'-1)h_a$ as translation by $-h_a$.

Next we claim that 
\begin{claim}
The numbers $0, h_a, 2h_a, \ldots, (q'-1)h_a$ are all distinct modulo $q$.
\end{claim}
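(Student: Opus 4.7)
The plan is to unwind the definitions of $g$, $q'$, and $h'$ and reduce to a standard coprimality argument. Recall that $g=\gcd(q,h_a)$, $q=q'g$, and $h_a=h'g$. The essential point is that the integers $q'$ and $h'$ are coprime, since we obtained them by dividing $q$ and $h_a$ by their greatest common divisor.

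With this in hand, I would proceed as follows. Suppose $ih_a\equiv jh_a\pmod{q}$ for some $0\le i,j\le q'-1$. Substituting $h_a=h'g$ and $q=q'g$ and dividing the resulting congruence by $g$, this becomes
\[
ih' \equiv jh' \pmod{q'},
\]
so $q'$ divides $(i-j)h'$. Since $\gcd(q',h')=1$, we conclude $q'\mid (i-j)$. But $|i-j|<q'$ by the range restriction on $i,j$, so $i=j$. This proves all $q'$ residues are distinct modulo $q$.

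There is essentially no obstacle here; the claim is a direct consequence of the elementary fact that in $\mathbb{Z}/q\mathbb{Z}$ the subgroup generated by $h_a$ has order exactly $q/\gcd(q,h_a)=q'$, so its first $q'$ multiples $0, h_a, 2h_a, \ldots, (q'-1)h_a$ are pairwise incongruent. The only care needed is to make sure the reduction to $\gcd(q',h')=1$ is stated cleanly, which follows immediately from the definition of the greatest common divisor.
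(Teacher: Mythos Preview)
Your proof is correct and follows essentially the same argument as the paper: both reduce the congruence $q\mid mh_a$ to $q'\mid mh'$ by dividing out $g$, then use $\gcd(q',h')=1$ to conclude $q'\mid m$, forcing $m=0$ by the range restriction. Your version is in fact slightly more explicit in handling the absolute value $|i-j|<q'$, but the substance is identical.
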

\begin{proof}If $q$ divides the difference between any two such numbers,  say $mh_a$,  then $q'\mid mh' \Rightarrow q'\mid m \Rightarrow m\geq q'$,  which is a contradiction. 
\end{proof}

In fact since $h'$ is invertible modulo $q$, the set of numbers $\{0, h_a, \ldots, (q'-1)h_a\}$ is the same as $\{0, g, 2g, \ldots, (q'-1)g\}$ modulo $q$. Thus to determine the congruence classes in the collection $A_1, A_2, \ldots, A_{nq'}$,  it is enough to find out which $n$-blocks overlap with the first $n-$block. Note that translating an $n-$block by $h_a(=h'g)$ takes it off itself entirely, so the \emph{only} translates of an $n$-block that could overlap with itself are the translates by $ig$ for $|i|<h'$ (See Figure \ref{fig:n-blocks}).

\begin{figure}[H]
	\centering
	\def\svgwidth{\textwidth}
	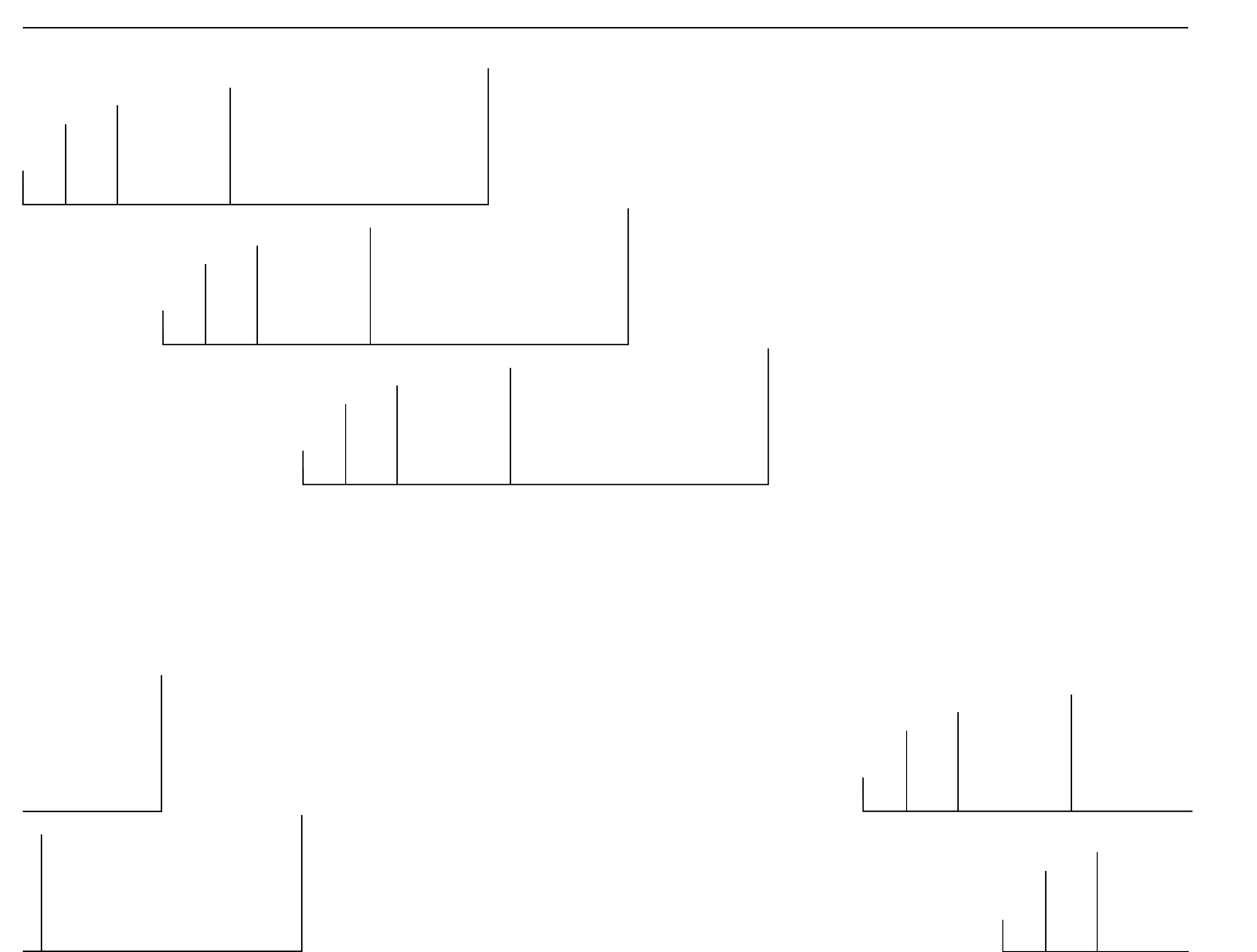
	\caption{Translates of the first $n$-block}
	\label{fig:n-blocks}
\end{figure}
Finally observe that if we start with the the $n$-block given by $(A_1+g, A_{2}+g, \ldots, A_{n}+g)$ instead,  we get overlaps at the same multiple of $g$ as the first $n$-block; only translated by $g$. Thus starting from $A_1$,  if we divide the residue class of $q$ into a total of $q'$ number of $g-$sized groups,  then each $\beta_i$'s appears same no. of times in each group and the overlaps appear at the same places translated by multiples of $g$. Hence to calculate the sum of $\max\{\beta_i\}$ over all residue classes,  it is enough to calculate it for the residue classes which appear among $A_1, A_1+1,  A_1+2, \ldots$ up to $A_1+(g-1)$ and then multiply the result by $q'$.

Let us summarize the results we have found so far in the form of an algorithm.
\begin{enumerate}[Step 1.]
\item Write down $A_1, A_2, \ldots, A_n$ where $A_i=\alpha_1+\ldots+\alpha_i$. 
\item For each $0\leq i\leq g-1$, let $\mathfrak{B}_i$ be defined as follows:
\[\mathfrak{B}_i=\max\left\{\beta_{k+mg}\,\,|\,\, A_{k+mg}\equiv A_1+i\pmod{q}\text{ where } -h'<m<h', 1\leq k\leq n\right\}\] Note that in case $q'<h'$, we replace $h'$ with $q'$ in above definition.
\item Let $S$ be the sum of $\mathfrak{B}_i$'s for $0\leq i\leq g-1$. Then the Fringe length is given by 
\begin{equation}
\fr_w(p/q)=\vf{q'S}
\end{equation}
\end{enumerate}

To finish the proof,  define $\sigma_w(g):=S/g$ and note that by the structure of the algorithm,  $\sigma_w(g)$ depends only on $g=gcd(q, h_a)$ and the word $w$. As a corollary,  we also get the remarkable consequence that
\begin{corollary}
The Fringe length \emph{does not} depend on $p$.
\end{corollary}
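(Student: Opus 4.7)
The plan is to read the corollary directly off the Fringe Formula. The right-hand side $1/(\sigma_w(g)\cdot q)$ explicitly depends only on $w$, on $q$, and on $g = \gcd(q, h_a(w))$ --- none of which involve $p$. So all that needs to be checked is that the construction of $\sigma_w(g)$ in the preceding algorithm is genuinely independent of $p$, despite the fact that $p$ appears in intermediate formulas.

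The only route by which $p$ can enter the algorithm is through the indices $s_i$. From the definition,
\[s_i \;=\; \sum_{j=1}^{i}(p\alpha_j + 1) + \sum_{j=1}^{i-1} l_j \;\equiv\; p\,A_i + 1 \pmod{q},\]
where $A_i = \alpha_1 + \cdots + \alpha_i$ depends only on $w$. Hence
\[s_I \equiv s_J \pmod{q} \iff p\,A_I \equiv p\,A_J \pmod{q},\]
and since $\gcd(p,q)=1$, the factor $p$ is invertible modulo $q$, so this reduces to $A_I \equiv A_J \pmod{q}$. Thus the partition of the indices $1,\ldots,nq'$ into classes sharing a common residue $s_i \bmod q$ is $p$-independent, and so is the maximum of the corresponding $\beta_i$'s on each class. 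This makes the set $\Lambda$, and in turn $\sigma_w(g)$, independent of $p$.

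The main (and only) potential obstacle is verifying that no additional $p$-dependence sneaks into later steps of the algorithm. But the optimal partition values $l_i = q\beta_i - 1$ depend only on $q$ and the $\beta_i$'s; and the final linear-programming optimum $1/\sum_{i\in \Lambda}\beta_i$ sees $p$ only through $\Lambda$, which we have just shown is $p$-independent. The Step~3 reduction to $g$-sized groups then shows that the final quantity depends on $q$ only via $g = \gcd(q, h_a(w))$, which completes the proof of the corollary.
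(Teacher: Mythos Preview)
Your proposal is correct and follows essentially the same approach as the paper: the corollary is read off directly from the Fringe Formula, and the key reason $\sigma_w(g)$ is $p$-independent is exactly the observation that $s_I \equiv s_J \pmod q \iff A_I \equiv A_J \pmod q$, which holds because $\gcd(p,q)=1$ makes $p$ invertible modulo $q$. The paper makes this same reduction (in the ``Reduction to combinatorics'' subsection) and then simply states the corollary as an immediate consequence; your write-up just makes the dependence-tracing a bit more explicit.
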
 
i.e. the Fringes are ``periodic'' on every scale. In section \S~\ref{section:Projective_Self_Similarity} we elaborate on this phenomenon in a particular example, and discuss possible generalizations.

\section{Examples and special cases}\label{section:examples}

In this section we give some examples to illustrate the complexity of the function $\sigma$ in general, 
and in the special case that $h_a(w)$ is prime. Let us first prove that
\begin{theorem}[$\sigma$-inequality]
	\label{theorem:sigma_inequality}
	Suppose $w=a^{\alpha_1}b^{\beta_1}a^{\alpha_2}b^{\beta_2}\ldots a^{\alpha_n}b^{\beta_n}$. Then the function  $\sigma_w(g)$ satisfies the inequality
	\[ \frac{h_b}{h_a} \leq \sigma_w(g) \leq \max\limits_{1\leq i\leq n} \beta_i\] where the first equality is achieved in the case when $h_a$ divides $q$ and the second equality occurs when $(q, h_a)=1$.
\end{theorem}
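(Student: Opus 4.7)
The plan is to extract both inequalities directly from the algorithmic description of $\sigma_w(g)$ derived in the proof of the Fringe Formula. After the cyclic permutation that puts $w$ in the form $b^{\beta_n}a^{\alpha_n}\cdots b^{\beta_1}a^{\alpha_1}$, we have $\sigma_w(g) = S_{\mathrm{total}}/q$, where $S_{\mathrm{total}} := q'S$ equals the sum, over the distinct residues $r$ modulo $q$ attained by the partial sums $A_j := \alpha_1 + \cdots + \alpha_j$ (for $1 \le j \le nq'$), of $\max\{\beta_j : A_j \equiv r \pmod q\}$. Here $\alpha_j, \beta_j$ are $n$-periodic and $\sum_{j=1}^n \beta_j = h_b$.

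The upper bound $\sigma_w(g) \le \max_i \beta_i$ is then immediate, since each $\mathfrak{B}_i$ is a maximum over a subset of $\{\beta_1, \ldots, \beta_n\}$. For equality when $\gcd(q, h_a) = 1$, note that $g = 1$, $q' = q$, and $h_a$ is a unit modulo $q$, so for each fixed $k$ the set $\{A_k + m h_a : 0 \le m < q\}$ sweeps through all of $\Z/q$. Thus every residue class is attained by every $k \in [1,n]$, so in particular by the $k$ maximizing $\beta_k$, giving $\mathfrak{B}_0 = \max_k \beta_k = \sigma_w(1)$.

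For the lower bound $\sigma_w(g) \ge h_b/h_a$, the key combinatorial input is that no residue class modulo $q$ is attained by more than $h' := h_a/g$ of the $A_j$'s. Indeed, $A_k + mh_a \equiv A_{k'} + m'h_a \pmod q$ forces $g \mid A_k - A_{k'}$ and then determines $m - m'$ modulo $q'$, so the multiplicity $N_r$ at a residue $r$ equals the number of $k \in [1,n]$ with $A_k$ lying in a fixed class modulo $g$; since $A_1 < \cdots < A_n = h_a$ and any arithmetic progression with common difference $g$ meets $[1, h_a]$ in exactly $h'$ points, we conclude $N_r \le h'$. Letting $B_r := \sum_{j : A_j \equiv r} \beta_j$ and applying $\max \ge$ average on each class,
\[
S_{\mathrm{total}} \;\ge\; \sum_{r : N_r > 0} \frac{B_r}{N_r} \;\ge\; \frac{1}{h'} \sum_r B_r \;=\; \frac{q' h_b}{h'},
\]
which rearranges to $\sigma_w(g) \ge h_b/h_a$. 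For equality when $h_a \mid q$: then $g = h_a$ and $h' = 1$, so every residue has multiplicity at most $1$, and in fact the $A_k$ are distinct modulo $h_a$ (strictly increasing with $A_n \equiv 0$), giving multiplicity exactly $1$ at $nq'$ residues and $S_{\mathrm{total}} = q'h_b$.

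I expect the multiplicity bound $N_r \le h'$ to be the main obstacle, in particular the casework when $q' < h'$ (where the Fringe Formula algorithm restricts the range of $m$ to $|m| < q'$ rather than $|m| < h'$). In that regime, however, the multi-set of $A_j$'s has only $nq' < nh'$ elements, so the spacing-by-$g$ argument still applies and the chain of inequalities persists without modification.
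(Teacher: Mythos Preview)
Your proof is correct. The upper bound and the two equality cases match the paper's ``second proofs'' (the ones that read the answer off the combinatorial algorithm for $S$) essentially word for word.

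For the lower bound $\sigma_w(g)\ge h_b/h_a$, your argument and the paper's share the same key input---the multiplicity bound $N_r\le h'$ for residues modulo $q$---but deploy it differently. The paper returns to the linear-programming reformulation (inequalities~(\ref{geneqn1})--(\ref{geneqn3})): from $N_r\le h'$ it gets $\sum_{i=1}^{nq'} t_{s_i}\le h'$, then sums the $nq'$ constraints $\beta_i - t_{s_i}\le \beta_i u$ to force $u\ge 1-h'/(q'h_b)$. You instead stay with the closed combinatorial formula $S_{\mathrm{total}}=\sum_r \max\{\beta_j:A_j\equiv r\}$ and apply $\max\ge$ average on each residue class. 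The two are equivalent once one unwinds the LP, but your route is shorter and avoids reintroducing the $t_i$'s and $u$; the paper's route has the mild advantage of making transparent exactly which feasible point witnesses equality in Case~1. Your handling of the edge case $q'<h'$ is also fine: your multiplicity bound counts pairs $(k,m)$ with $0\le m<q'$ directly, so nothing changes.
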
   
\begin{proof}
For the first inequality,  recall the numbers $A_1, A_2, \ldots,  A_{nq'}$ from last section. Note that the fact that $h_a\cdot q'=h'\cdot q$ tells us that there are at most $h'$ elements in each residue class modulo $q$ among $A_1, \ldots, A_{nq'}$. Thus 
\[\sum_{i=1}^{nq'}t_{s_i}\leq h'\cdot \sum_{i\in \Lambda}t_{s_i} \leq h'\cdot \sum_{i=1}^{q}t_i =h'\]
On the other hand,  adding all the $nq'$ inequalities in (\ref{geneqn3}),  and using $l_i=q\beta_i-1$,  we get that
\[u.\sum_{i=1}^{nq'}\beta_i \geq \sum_{i=1}^{nq'}\left(\beta_i\sum_{j=1}^{q}t_j - t_{s_i}\right)=\sum_{i=1}^{nq'}\beta_i - \sum_{i=1}^{nq'}t_{s_i}\geq \sum_{i=1}^{nq'}\beta_i - h' \] 
\[u\geq 1- \frac{h'}{h_b.q'}= 1-\frac{h_a}{h_bq} \]
Hence,  for the minimal $u$ giving the fringe length we get that
\[\sigma_w(g)\geq \frac{h_b}{h_a}.\]

For the second inequality,  observe that by definition,  \[\fr_w(p/q)=\vf{\sigma_w(g)q}=\vf{\sum_{i\in \Lambda}\beta_i}\geq \vf{|\Lambda|\cdot \max_{i\in\Lambda}\beta_i } \geq \vf{q\cdot \max_{i\in\Lambda}\beta_i }\] since number of elements in $\Lambda$ is at most the number of residue classes modulo $q$. Hence \[\sigma_w(g)\leq \max_{i\in\Lambda}\beta_i \leq \max_{1\leq i\leq n}\beta_i.\]
We will finish the proof by showing that equality is indeed achieved in the following special cases:
\subsection*{Case 1 : $h_a \mid q$} In this case $h'=1$. Hence all the $s_i$'s are distinct.

Consider the specific example where $t_{s_i}={\beta_i}/({h_bq'})$ for all $i$ and the rest of the $t_i$'s are zero. Then we have 
\[\beta_i.u\geq \sum_{j\neq i} \frac{\beta_j}{h_bq'}.\beta_i + \frac{\beta_i}{h_bq'}.(\beta_i-1)=\beta_i.\frac{h_bq'}{h_bq'}-\frac{\beta_i}{h_bq'} \Rightarrow u\geq 1-\frac{1}{h_bq'}. \]
Thus the minimum $u_0$ which gives a solution to (\ref{geneqn1}), (\ref{geneqn2}), (\ref{geneqn3}) is $1-1/(q'h_b)= 1-{h_a}/({h_b q}).$ Thus equality is achieved in the first part of Theorem \ref{theorem:sigma_inequality}.

We can give a second proof of this same fact using the algorithm developed in last section. Since $h_a\mid q$,  the $gcd$ of $h_a$ and $q$ is $h_a$. So any $g-$translate of the $n$-block is disjoint from itself. Hence $S=h_b$,  giving the same formula as above.

\subsection*{Case 2: $gcd(h_a, q)=1$} In this situation,  $g=1$.  Hence $c=h_a.p+h_b.q$ and $d=q$ since $q=q'$. 

Let $W=Y^{t_1}XY^{t_2}\ldots Y^{t_q}X$ as in the proof of Theorem \ref{theorem:stairstep}. Since $w$ now has a periodic orbit of period exactly $q$,  we get that any $b-$string starting on adjacent $X'$s must land in adjacent $Y^*$ strings. Thus the constraints of the linear programming problem are invariant under permutation of the variable $t_i$,  and by convexity,  extrema is achieved when all $t_i$'s are equal. But then we get 
\[q.t_i=1\Rightarrow t_i=\vf{q}\] and
\[\beta_iu\geq l_i.t_i=\frac{(q\beta_i-1)}{q} \Rightarrow u\geq 1-\vf{q\beta_i} \qquad\forall\,  1\leq i\leq nq\]
Hence the minimum $u$ which gives a solution to the system of equation is given by 
\[u=1-\vf{q.\max_{1\leq i\leq n}\{\beta_i\}}.\]
Observing that equality is indeed achieved in case of the word $\left(XY^{\max\{\beta_i\}}\right)^q$,  we get equality in the second part of Theorem \ref{theorem:sigma_inequality}.

Again,  we can give a much simpler proof of this result using the algorithm in the last section. In this case,  we have $g=1$ so that $q=q'$. So $S$ is the maximum of all the $\beta_i$'s which correspond to any $A_i$ which is a translate of $A_1$ by one of $-h_a, -h_a+1, \ldots, 0, \ldots, h_a-1, h_a$; i.e. all of the $A_i$'s.  Thus $S=\sigma_w(g)=\max_{1\leq i\leq n}\{\beta_i\}$ since $g=1$.
\end{proof}

\begin{corollary}\label{corollary:sigma_prime}
If $h_a$ is a prime number then 
\[\fr_w(p/q)=\begin{cases} \displaystyle\frac{h_a}{q\cdot h_b},  & \text{if } h_a\mid q \\ \displaystyle\vf{q\cdot \max\limits_{1\leq i\leq n}\beta_i},  & \text{if } h_a \nmid q \end{cases}.\]
\end{corollary}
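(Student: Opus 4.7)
The plan is to observe that the primality of $h_a(w)$ drastically restricts the possible values of $g:=\gcd(q, h_a(w))$, and then invoke the two equality cases already established in the $\sigma$-inequality theorem.

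First I would note that since $h_a$ is prime, its only positive divisors are $1$ and $h_a$ itself. Hence $g = \gcd(q, h_a)$ must equal either $1$ or $h_a$, and these two alternatives correspond precisely to whether $h_a \nmid q$ or $h_a \mid q$. This reduction is the only conceptual step; the rest is a direct substitution into the Fringe Formula~\ref{theorem:fringe_formula}.

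Next I would split into the two cases. In the case $h_a \mid q$, we have $g = h_a$, and the first equality clause of the $\sigma$-inequality theorem (Theorem~\ref{theorem:sigma_inequality}) gives $\sigma_w(g) = h_b/h_a$. Plugging into the Fringe Formula yields
\[
\fr_w(p/q) \;=\; \frac{1}{\sigma_w(g)\cdot q} \;=\; \frac{h_a}{h_b\cdot q},
\]
matching the first branch of the claimed expression. In the case $h_a \nmid q$, we have $g = 1$, so $q$ and $h_a$ are coprime, and the second equality clause of Theorem~\ref{theorem:sigma_inequality} gives $\sigma_w(g) = \max_{1\le i\le n}\beta_i$. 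Again by the Fringe Formula,
\[
\fr_w(p/q) \;=\; \frac{1}{q\cdot \max_{1\le i\le n}\beta_i},
\]
which is the second branch.

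There is no real obstacle here; the corollary is essentially a bookkeeping consequence of the dichotomy imposed by primality. The only thing to double-check is that the hypotheses of the two equality cases in Theorem~\ref{theorem:sigma_inequality} line up exactly with the dichotomy $g\in\{1, h_a\}$, which they do. Thus the proof is a short two-case argument with no new computation required beyond citing the previously established equalities.
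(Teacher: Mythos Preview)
Your proposal is correct and matches the paper's approach exactly: the paper states the corollary immediately after Theorem~\ref{theorem:sigma_inequality} with no separate proof, since it is the direct consequence of observing that primality of $h_a$ forces $g\in\{1,h_a\}$ and then applying the two equality cases of the $\sigma$-inequality.
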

\begin{remark}
The function $\sigma_w(g)$ depends on $g=gcd(h_a,q)$ in a  complicated way when $h_a$ is not prime as we can see from the following table:
\begin{table}[H]
\centering
\begin{tabular}{|c||c|c|c|c|c|}
	\hline Word & &$p/q=1/5$ & $p/q=1/2$ & $p/q=1/3$ & $p/q=1/6$ \\
	\hline $h_a=6$ & $h_b$ &  $g=1$ & $g=2$ & $g=3$ & $g=6$ \\
	\hline\hline $aaabaaabbbb$& 5 & $4$ & $5/2$ & $4/3$ & $5/6$ \\
	\hline $abaabaaabbbb$ & 6 & 4 & 5/2 & 5/3 & 1 \\
	\hline $abbaabaaabbbb$ & 7 & 4 & 3 & 2 & 7/6 \\
	\hline $abbbaabaaabbbb$ & 8 &  4 & 7/2 & 4/3 & 7/3 \\
	\hline $abbbababaaabbbb$ & 9 & 4 & 7/2 & 8/3 & 3/2 \\
	\hline $abbbaabbaaabbbb$ & 9 & 4 & 7/3 & 7/3 & 3/2 \\
	\hline $abbbababbaaabbbb$ & 10 & 4 & 7/2 & 8/3 & 5/3 \\
	\hline 
\end{tabular} 
\caption{Values of $\sigma_w(g)$ for different $w$ and $g$}
\label{tab:sigma_values}
\end{table}

\end{remark}

\begin{example}\label{example:abaab_fringe_formula}
Let us consider the case of the word $w=abaab$. By corollary \ref{corollary:sigma_prime},  the left fringe lengths are given by\[
\fr_w(p/q)=\begin{cases}
\displaystyle\frac{3}{2q} & \text{ when } 3\mid q\\
\displaystyle\vf{q} & \text{ when } 3\nmid q
\end{cases}
\]
and the right fringe lengths are given by
\[\fr_w(p/q)=\begin{cases}
\displaystyle\frac{2}{3q} & \text{ when } q \text{ is even}.\\
\displaystyle\vf{2q} & \text{ when } q \text{ is odd}
\end{cases}.\]
The cases when $3\nmid q$ and $2\nmid q$ were also discussed in \cite{Calegari_Walker}, p $18$.

We finish this section by giving a Fringe plot for both sides for the word $w=abaab$. Let us put the origin at the point $(r=1, s=1)$ and the point $(r=0, s=0)$ be depicted as $(1, 1)$. Then we have the following picture.
\begin{figure}[H]
	\centering
	\includegraphics[trim = 6.5cm 2cm 4cm 1cm, clip, width=\textwidth]{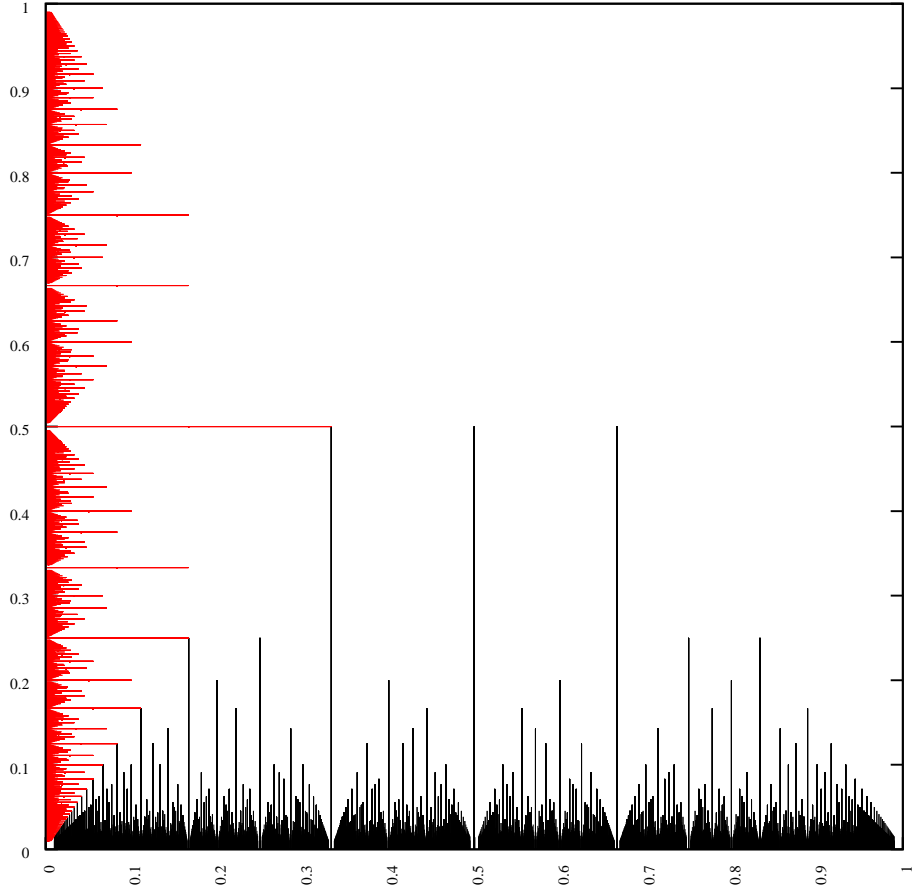}
    \caption{Plot of the fringes of $abaab$, $q=1$ to $100$}
\end{figure}

\end{example}

\section{Projective Self Similarity}\label{section:Projective_Self_Similarity}
In her paper \cite{Gordenko},  A. Gordenko shows that the the Ziggurat of the word $w=ab$ is self similar under two projective transformation (Theorem $4$). In this section we show that similar transformations exist in case of the word $w=abaab$,  which gives a different way to look at the Fringe formula.

Let us first look at the self-similarities of the Left Fringe. Below is a plot of the Fringe lengths where $x$-axis is the value of $\rot^\sim(a)$ and $y-$axis is value of $\fr_{abaab}(x)$. Thus for $x=p/q$ we have $\fr_{abaab}(x)$ defined as in Example (\ref{example:abaab_fringe_formula}). We will drop the subscript $abaab$ for the next part.

We prove that the unit interval can be decomposed into some finite number of intervals $\Delta_i$ such that there exist a further decomposition of each $\Delta_i$ into a disjoint union of subintervals $I_{i, j}$ such that the graph of $\fr(x)$ on each of $I_{i, j}$ is similar to that on some $\Delta_{k(i, j)}$ under  projective linear transformations as follows: 

\begin{figure}[H]
	\centering
	\includegraphics[trim = 3.5cm 2cm 1cm 0cm, clip,width=\linewidth]{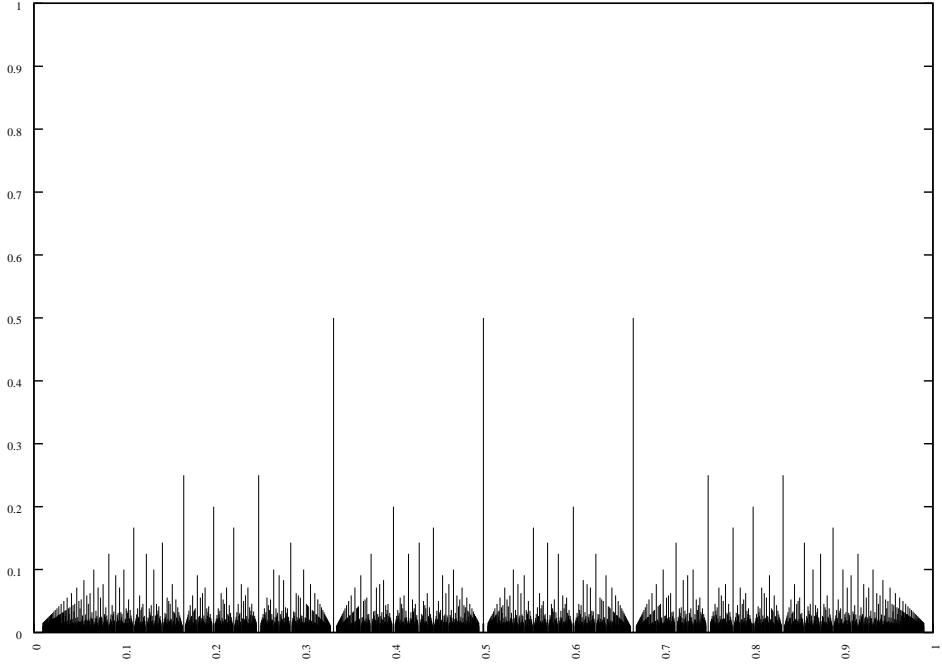}
	\caption{Plot of Left Fringe,  $q=1$ to $100$}
	\label{fig:left_fringe}
\end{figure}

\begin{theorem}\label{theorem:abaab_self_similarity}
Let $\Delta_1=(0, 1/3), \Delta_2=(1/3, 1/2),  \Delta_3=(1/2, 2/3)$ and $\Delta_4=(2/3, 1)$. Then we have the following decomposition into $I_{i, j}$ and transformations $T_{i, j}$:
\begin{align*}
&I_{1, 1}=(0, 1/4),  & T_{1, 1}(I_{1, 1})&=\Delta_1\cup\Delta_2\cup\Delta_3\cup\Delta_4=[0, 1], \\ 
&&&T_{1, 1}(x, y)=\left(\frac{x}{1-3x}, \frac{y}{1-3x}\right) \\
&I_{1, 2}=(1/4, 1/3),  & T_{1, 2}(I_{1, 2})&=\Delta_1, \\ &&&T_{1, 2}(x, y)=\left(\frac{4x-1}{9x-2}, \frac{y}{9x-2}\right) \\
&I_{2, 1}=(1/3, 1/2),  & T_{2, 1}(I_{2, 1})&=\Delta_1, \\ &&&T_{2, 1}(x, y)=\left(\frac{1-2x}{2-3x}, \frac{y}{2-3x}\right)
\end{align*}
Since the graph is clearly symmetric about $x=1/2$,  similar decomposition exists for $\Delta_3$ and $\Delta_4$ (See Figure \ref{fig:self_similarity_abaab}).
\end{theorem}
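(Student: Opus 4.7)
The plan is to reduce everything to the explicit fringe formula from Corollary~\ref{corollary:sigma_prime}, which for $w=abaab$ gives $\fr_w(p/q) = 3/(2q)$ when $3 \mid q$ and $\fr_w(p/q) = 1/q$ otherwise. In both cases the value depends only on $q$, so one immediately obtains $\fr_w(p/q) = \fr_w((q-p)/q)$, which is exactly the reflection symmetry of the graph about $x = 1/2$. Consequently it suffices to verify the three identities for $T_{1,1}$, $T_{1,2}$, $T_{2,1}$; the decompositions of $\Delta_3$ and $\Delta_4$ then follow by symmetry.

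For each $T_{i,j}$ I would check three separate points. First, direct evaluation at the endpoints of $I_{i,j}$, together with the monotonicity of the M\"obius map on $x$ (the $x$-derivative of each $T_{i,j}$ is $\pm 1/(\text{denom})^2$), shows that $T_{i,j}$ sends $I_{i,j}$ bijectively onto the advertised $\Delta_k$; e.g.\ $T_{1,1}$ sends $0 \mapsto 0$ and $1/4 \mapsto 1$, while $T_{2,1}$ is orientation-reversing and sends $1/3 \mapsto 1/3$, $1/2 \mapsto 0$. Second, each $T_{i,j}$ is induced on projective coordinates by an integer matrix of determinant $\pm 1$ acting on the column $(p,q)^T$: namely $\left(\begin{smallmatrix}1 & 0\\ -3 & 1\end{smallmatrix}\right)$ for $T_{1,1}$, $\left(\begin{smallmatrix}4 & -1\\ 9 & -2\end{smallmatrix}\right)$ for $T_{1,2}$, and $\left(\begin{smallmatrix}-2 & 1\\ -3 & 2\end{smallmatrix}\right)$ for $T_{2,1}$. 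Hence $\gcd(p,q)=1$ forces $\gcd(p', q')=1$, and positivity of $p'$ and $q'$ is automatic from the range of $x$ on $I_{i,j}$. Third, a one-line modular check gives $3 \mid q' \iff 3 \mid q$ in each case, using $q-3p \equiv q$, $9p - 2q \equiv q$, and $2q - 3p \equiv 2q \pmod{3}$.

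With these three items in hand, verifying that $\fr(p'/q')$ equals the $y$-component of $T_{i,j}(p/q,\fr(p/q))$ becomes a short case split on whether $3 \mid q$: in each case one multiplies $\fr(p/q)$ by the rational factor $q/q'$ appearing in the $y$-coordinate of $T_{i,j}$, and reads off $3/(2q')$ or $1/q'$ directly, which by the previous paragraph is the correct formula for $\fr(p'/q')$. I do not anticipate a serious obstacle; the only genuinely non-automatic ingredient is \emph{finding} the correct matrices and partition $\{I_{i,j}\}$ — that is, recognising which elements of $\SL(2,\Z)$ organise the self-similarity — but once these are written down as in the statement, the proof reduces to a mechanical checklist.
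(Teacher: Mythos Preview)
Your proposal is correct and follows essentially the same approach as the paper: check that each $T_{i,j}$ sends $p/q$ to a reduced fraction whose denominator has the same $\gcd$ with $h_a=3$, and then read off the fringe length from the explicit formula. The paper's own proof is a three-line sketch saying exactly this, so your version is simply a more explicit unpacking (unimodular matrices, endpoint checks, modular arithmetic) of what the paper leaves to the reader.
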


\begin{figure}[H]
	\def\svgwidth{\textwidth}
	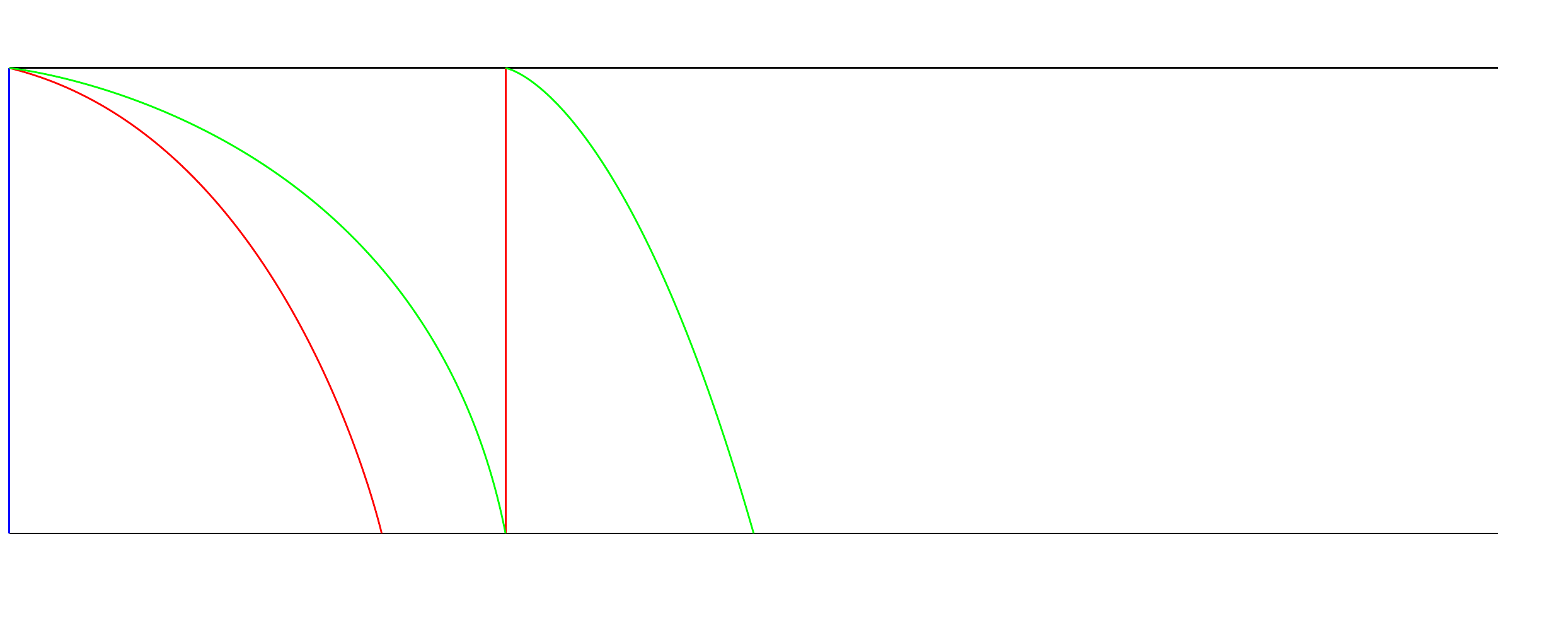
	\caption{Intervals of projective self similarity in case of $w=abaab$}
	\label{fig:self_similarity_abaab}
\end{figure}

\begin{proof}
 For each of the transformations note that the denominator of the image of $p/q$ has the same gcd with $h_a$ as $q$. Also,  in each case,  the numerator and denominator are coprime. The proof then follows easily by checking the length of images in each case.
\end{proof}

 We thus note that in fact $\Delta_1$ contains all the  information necessary to determine the fringe dynamics. In fact,  for $h_a$ prime the following similarity result always holds: 
\begin{theorem}
Let $\Delta_1=(0, 1/h_a)$ where $h_a$ is a prime number. Then we can decompose $\Delta_1$ into $I_{i, j}$ and find transformations $T_{i, j}$ as follows:
\begin{align*}
&I_{1, 1}=(0, 1/(h_a+1)),  & T_{1, 1}(I_{1, 1})&=[0, 1], \\ &&T_{1, 1}(x, y)&=\left(\frac{x}{1-h_ax}, \frac{y}{1-h_ax}\right) \\
&I_{1, 2}=(1/(h_a+1), 1/h_a),  & T_{1, 2}(I_{1, 2})&=\Delta_1, \\ &&T_{1, 2}(x, y)&=\left(\frac{(h_a+1)x-1}{h_a^2x-(h_a-1)}, \frac{y}{h_a^2x-(h_a-1)}\right)
\end{align*}
\end{theorem}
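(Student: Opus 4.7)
The plan is to imitate the argument for Theorem~\ref{theorem:abaab_self_similarity}. Each transformation $T_{1,j}$ is the restriction to the graph of $\fr_w$ of a map of the form $(x,y) \mapsto (\phi_j(x),\, y/\psi_j(x))$, where $\phi_j$ is a M\"obius transformation with integer coefficients and $\psi_j(x)$ is the denominator appearing in $\phi_j$. The core of the proof is to verify, for each $j \in \{1,2\}$, the following four assertions: (a) $T_{1,j}$ maps $I_{1,j}$ onto the claimed image interval; (b) if $x = p/q$ is reduced, so is $\phi_j(x) = p'/q'$; (c) $\gcd(q', h_a) = \gcd(q, h_a)$; and (d) if $y = \fr_w(x)$ then $y/\psi_j(x) = \fr_w(\phi_j(x))$.

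For (a) it suffices to compute the images of the two endpoints of $I_{1,j}$; e.g.\ $T_{1,1}(0) = 0$, $T_{1,1}(1/(h_a+1)) = 1$, $T_{1,2}(1/(h_a+1)) = 0$, and $T_{1,2}(1/h_a) = 1/h_a$. For (b), the maps $\phi_1$ and $\phi_2$ are represented by the matrices $M_1 = \bigl(\begin{smallmatrix} 1 & 0 \\ -h_a & 1 \end{smallmatrix}\bigr)$ and $M_2 = \bigl(\begin{smallmatrix} h_a+1 & -1 \\ h_a^2 & -(h_a-1) \end{smallmatrix}\bigr)$, both of determinant $1$; hence each $M_j \in \mathrm{SL}(2,\Z)$, so $\gcd(p,q) = 1$ forces $\gcd(p',q') = 1$. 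For (c), one reduces modulo $h_a$: for $T_{1,1}$ we have $q' = q - h_a p \equiv q \pmod{h_a}$, and for $T_{1,2}$ we have $q' = h_a^2 p - (h_a-1) q \equiv q \pmod{h_a}$. Since $h_a$ is prime, this immediately gives $\gcd(q', h_a) = \gcd(q, h_a)$.

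The essential step is (d), which uses Corollary~\ref{corollary:sigma_prime}: because $h_a$ is prime, $\fr_w(p/q) = 1/(q\cdot \sigma_w(g))$ with $\sigma_w(g)$ taking just two possible values, distinguished by whether $h_a \mid q$. Step (c) guarantees that $x$ and $\phi_j(x)$ lie in the same case, so the value of $\sigma_w$ is unchanged under $T_{1,j}$. A direct computation shows $\psi_j(x) = q'/q$ in both cases, hence
\[
\frac{y}{\psi_j(x)} \;=\; \frac{1/(q\,\sigma_w(g))}{q'/q} \;=\; \frac{1}{q'\,\sigma_w(g)} \;=\; \fr_w(\phi_j(x)),
\]
as required.

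I do not anticipate a serious obstacle: once one recognises that $M_1, M_2 \in \mathrm{SL}(2,\Z)$ and that primality of $h_a$ collapses the fringe formula to two cases controlled purely by $\gcd(q, h_a)$, the verification is mechanical. The only care needed is handling boundary and fixed points of the transformations (for instance $x = 1/h_a$, which is fixed by $T_{1,2}$) to confirm that the endpoint behaviour of the fringe graph on $\Delta_1$ and on $I_{1,j}$ matches under $T_{1,j}$.
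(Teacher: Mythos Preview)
Your proposal is correct and follows essentially the same approach as the paper's proof of Theorem~\ref{theorem:abaab_self_similarity}: verify that each M\"obius map preserves both coprimality of numerator and denominator and $\gcd(q,h_a)$, then invoke Corollary~\ref{corollary:sigma_prime} to see that the fringe length scales by exactly the denominator factor $q'/q$. You have simply made the paper's terse sketch explicit by recording the $\mathrm{SL}(2,\Z)$ matrices and the mod-$h_a$ reductions.
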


It is also easy to prove in the case of prime $h_a$ that  the plot on $\Delta=[\frac{(h_a-1)}{2h_a}, \vf{2}]$ is similar to $\Delta_1$ under the transformation\[T(x, y)=\left(\frac{2-4x}{(h_a+1)-2h_ax}, \frac{2y}{(h_a+1)-2h_ax}\right)\]
Note that in case of $h_a=3$,  we have $(h_a-1)/2h_a=1/h_a$,  which explains Theorem \ref{theorem:abaab_self_similarity}.

\end{document}